\DeclareMathAlphabet{\mathpzc}{OT1}{pzc}{m}{it}
\numberwithin{equation}{section}
\theoremstyle{plain}
\newtheorem{lem}{Lemma}[section]
\newtheorem{lemma}[lem]{Lemma}
\newtheorem{thm}[lem]{Theorem}
\newtheorem{prop}[lem]{Proposition}
\newtheorem{cor}[lem]{Corollary}
\theoremstyle{definition}
\newtheorem{exa}[lem]{Example}
\newtheorem{rem}[lem]{Remark}
\newtheorem{defn}[lem]{Definition}
\newtheorem{Conj}[lem]{Conjecture}
\begin{document}

\baselineskip 16truept

\title{Diameter and girth of zero divisor graph of multiplicative lattices }

\author{Vinayak Joshi and Sachin Sarode}
\address{\rm Department of Mathematics, University of Pune,
Pune-411007, India.} \email{vvj@math.unipune.ac.in \\
sarodemaths@gmail.com}
\thanks{ This research is supported by Board of College and University Development, University of Pune}
\subjclass[2010]{Primary 05C15, Secondary
06A12}

\date{October 18, 2013} \maketitle



\begin{abstract}
In this paper, we study the zero divisor graph $\Gamma^m(L)$ of a multiplicative lattice $L$. We prove under certain conditions that for a  reduced multiplicative lattice $L$ having more than two minimal prime elements, $\Gamma^m(L)$ contains a cycle and $gr(\Gamma^{m}(L)) = 3$. This essentially proves that for a reduced ring $R$ with more than two minimal primes,  $gr(\mathbb{AG}(R)))= 3$ which settles the conjecture of Behboodi and Rakeei \cite{br2}. Further, we have characterized the diameter of $\Gamma^m(L)$.

\end{abstract}
\noindent{\bf Keywords:} Zero-divisor graph, reduced multiplicative
lattice, minimal prime element.

\section{Introduction}
In recent years, lot of attention have been given to the study of
zero divisor graphs of algebraic structures and ordered
structures. The idea of a zero divisor graph of a commutative ring
with unity was introduced by Beck \cite{B}. He was particularly
interested in the coloring of commutative rings with unity. Many
mathematicians like Anderson and Naseer \cite{AN}, Anderson and
Livingston \cite{AL},  F. DeMeyer, T. McKenzie and K. Schneider
\cite{DMS}, Maimani, Pournaki and Yassemi \cite{mpy}, Redmond
\cite{red} and Samei \cite{sam} investigated the interplay between
properties of the algebraic structure and graph theoretic
properties.

The zero divisor graphs of ordered structures are well studied by
Alizadeh et. al. \cite{ali1,ali2} Hala\v{s} and Jukl \cite{hj},
Hala\v{s} and L\"anger \cite{hl}, Joshi \cite{j}, Joshi et.al.
\cite{ja, jaa, jk, jwp, jwp1}, Nimbhorkar et.al \cite{nwl} etc.

In ring theory, the structure of a ring $R$ is closely related to
ideal's behavior more than elements. Hence Behboodi and Rakeei
\cite{br1, br2} introduced the concept of annihilating ideal-graph
$\mathbb{AG}(R)$ of a commutative ring $R$  with unity, where the
vertex set $V(\mathbb{AG}(R))$ is the set of non-zero ideals with
non-zero annihilator, that is, for a non-zero  ideal $I$ of  $R$,
$I\in V(\mathbb{AG}(R))$ if and only if there exists a non-zero
ideal $J$ of $R$ such that $I  J=(0)$ and two distinct vertices
$I$ and $J$ are adjacent if and only if $IJ=(0)$ and studied the
properties of rings and its annihilating ideal-graphs. In
\cite{br2}, Behboodi and Rakeei raised the following conjecture.

\begin{Conj} \textit{Let $R$  be a reduced ring with
more than two minimal primes. Then $gr(\mathbb{AG}(R)))= 3$.}
\end{Conj}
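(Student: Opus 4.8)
The plan is to recast the problem in the language of multiplicative lattices and prove the statement there. For a commutative ring $R$ with identity, the ideal lattice $\mathrm{Id}(R)$, ordered by inclusion with the usual ideal product, is a multiplicative lattice whose identity element is $R$; here $R$ is reduced exactly when $\mathrm{Id}(R)$ is a reduced multiplicative lattice, the (minimal) prime elements of $\mathrm{Id}(R)$ are the (minimal) prime ideals of $R$, and the associated zero divisor graph $\Gamma^{m}(\mathrm{Id}(R))$ is, by definition, $\mathbb{AG}(R)$. Hence it suffices to show: if $L$ is a reduced multiplicative lattice satisfying suitable standing hypotheses (satisfied in particular by $\mathrm{Id}(R)$ --- e.g., that $L$ is a $C$-lattice) and $L$ has more than two minimal prime elements, then $\Gamma^{m}(L)$ contains a triangle; since a triangle is a cycle of minimal possible length, this forces $gr(\Gamma^{m}(L)) = 3$.

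To build the triangle I would first record the elementary facts that in any multiplicative lattice $xy \le x \wedge y$ and $(x\wedge y)^{2} \le xy$ (so that in the reduced case $xy = 0 \iff x \wedge y = 0$), and that in a reduced multiplicative lattice $0$ equals the meet of all minimal prime elements. Now fix three distinct minimal prime elements $p_{1}, p_{2}, p_{3}$ of $L$ and set $a_{i} = \bigwedge\{\, q : q \text{ a minimal prime element},\ q \neq p_{i}\,\}$ for $i = 1,2,3$. For $i \ne j$ the two index sets defining $a_{i}$ and $a_{j}$ jointly cover all minimal prime elements, so $a_{i} \wedge a_{j}$ is the meet of all minimal prime elements, i.e. $a_{i} \wedge a_{j} = 0$ and hence $a_{i} a_{j} = 0$. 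Moreover $a_{i} \le p_{j}$ whenever $j \ne i$, whereas $a_{j} \le p_{j}$ would give $a_{j} = a_{j} \wedge p_{j} = 0$; so as soon as the $a_{i}$ are shown to be nonzero they are automatically pairwise distinct, and $\{a_{1}, a_{2}, a_{3}\}$ is the desired triangle.

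The one real obstacle is to show the $a_{i}$ can be chosen nonzero; this is where the hypotheses on $L$ and the difference between finitely and infinitely many minimal primes enter. If $L$ has only finitely many minimal prime elements $p_{1}, \dots, p_{n}$ with $n \ge 3$, then $a_{i} = \bigwedge_{j \ne i} p_{j}$ is a \emph{finite} meet, and $a_{i} = 0 \le p_{i}$ would make the prime (hence meet-prime) element $p_{i}$ lie above some $p_{j}$ with $j \ne i$, contradicting the incomparability of distinct minimal primes; so every $a_{i} \ne 0$ and we are done. In the general case an infinite meet of minimal primes avoiding a fixed $p_{i}$ may collapse to $0$ (as it does for the ideal lattice of the Boolean ring of clopen subsets of a perfect Stone space), so one cannot start from an arbitrary triple. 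Here I would use the structural hypotheses to produce, for three minimal primes $p_{i}$, compact elements $c_{i}$ with $c_{i} \not\le p_{i}$ and $c_{i} \le q$ for every other minimal prime $q$ (so that $\{c_{1}, c_{2}, c_{3}\}$ does the job), and argue that when no such triple exists the failure itself yields enough ``orthogonal'' pieces of $1$ to produce three pairwise-annihilating nonzero elements directly; for $\mathrm{Id}(R)$ these possibilities are exhaustive.

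Assembling the pieces: $\Gamma^{m}(L)$ contains the triangle $a_{1} - a_{2} - a_{3} - a_{1}$, so $gr(\Gamma^{m}(L)) = 3$; specialising to $L = \mathrm{Id}(R)$ for a reduced ring $R$ with more than two minimal primes yields $gr(\mathbb{AG}(R)) = 3$, which settles the conjecture. I expect everything except the selection of the three nonzero witnesses in the infinite case (the identification $\Gamma^{m}(\mathrm{Id}(R)) = \mathbb{AG}(R)$, the elementary lattice facts, and the finite-minimal-primes computation) to be quick, with essentially all the difficulty concentrated in that selection step.
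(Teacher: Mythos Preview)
Your reduction to multiplicative lattices and the identification $\mathbb{AG}(R)=\Gamma^{m}(\mathrm{Id}(R))$ are exactly right, and your argument in the case of finitely many minimal primes is correct and in fact cleaner than what the paper does: the elements $a_i=\bigwedge_{j\ne i}p_j$ are nonzero by meet-primeness of $p_i$, pairwise orthogonal since together any two of them meet below every minimal prime, and pairwise distinct since $a_i\le p_j$ for $j\ne i$ while $a_i\not\le p_i$. Once a triangle is exhibited the girth is automatically~$3$, so you are also right that no separate ``girth $\ne 4$'' argument is needed (the paper's second half, which passes through K\"onig's theorem and a bipartite characterisation, is logically redundant once the triangle is in hand).

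The genuine gap is the infinite case, and you have not closed it. Your own example (clopens of a perfect Stone space) shows the $a_i$ can all vanish, and the fallback sketch---produce compact $c_i\not\le p_i$ lying below every other minimal prime, or else ``the failure itself yields enough orthogonal pieces''---is not an argument: the first alternative asks for exactly the same thing as $a_i\ne 0$, and the second is an assertion, not a construction. The paper avoids the finite/infinite dichotomy entirely by working locally with three chosen minimal primes $p_1,p_2,p_3$ and the characterisation of minimal primes via compact elements (for compact $x\le p$ minimal there is compact $y\not\le p$ with $xy=0$ in the reduced case). Concretely: choose compact $z_1\le p_1$ with $z_1\not\le p_2,p_3$; then $z_1\cdot p_2\ne 0$ (else $z_1\le p_3$ or $p_2\le p_3$), so pick compact $0\ne t\le z_1\cdot p_2$; the characterisation gives compact $y_1\not\le p_1$ with $z_1 y_1=0$ and compact $y_2\not\le p_2$ with $t y_2=0$. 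If $y_1y_2=0$ then $\{t,y_1,y_2\}$ is a triangle; otherwise $y_1y_2\le p_2$ (since $y_1\le p_2$ from $z_1y_1=0$), so there is $z_2\not\le p_2$ with $y_1y_2z_2=0$, and $\{t,\,y_1y_2,\,y_2z_2\}$ is a triangle. This is the missing idea you need to replace your hand-wave in the infinite case.
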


It is interesting to observe that the set $Id(R)$ of all ideals of
a commutative ring $R$ with unity forms a modular, compactly
generated, 1-compact  multiplicative lattice in which product of
two compact element is compact (see Definition \ref{1.1}) and the
annihilating ideal-graph of a commutative ring $R$ with unity is
nothing but the zero divisor graph of the multiplicative lattice
of all ideals of $R$ where the vertex set is the set of non-zero
zero divisors and vertices $a$ and $b$ are adjacent if and only if
$ab=0$. Hence to study the annihilating ideal-graphs of
commutative ring with unity, a multiplicative lattice becomes a
tool. This motivate us to define and study the zero divisor graph
of a multiplicative lattice. It is natural  to ask the following
question and the affirmative answer to this question solves
Conjecture 1.1. of Behboodi and Rakeei \cite{br2}.

\noindent{\bf Question 1:} {\it Let  $L$ be a reduced, $1$-compact, compactly generated lattice
with product of two compact element is compact. Let $\Gamma^m(L)$ be  a zero divisor graph of a multiplicative lattice $L$. Is $gr(\Gamma^{m}(L)) = 3$, if $L$ has more than two minimal prime elements?}\\

In this paper, we study the zero divisor graph $\Gamma^m(L)$ of a multiplicative lattice $L$. We prove under certain conditions that for a  reduced multiplicative lattice $L$ having more than two minimal prime elements, $\Gamma^m(L)$ contains a cycle and $gr(\Gamma^{m}(L)) = 3$. This essentially proves that for a reduced ring $R$ with more than two minimal primes,  $gr(\mathbb{AG}(R)))= 3$ which settles the conjecture of Behboodi and Rakeei \cite{br2}. Further, we have characterized the diameter of $\Gamma^m(L)$.

   Now, we begin with necessary concepts and terminology.

\begin{defn} \label{1.1}  A non-empty subset $I$ of a lattice $L$ is said to be \textit{semi-ideal}, if $x \leq a\in I$ implies that $x\in I$. A semi-ideal $I$ of $L$ is said to be an \textit{ideal},
if for $a,b \in I$, $a \vee b \in I$. A proper ideal (semi-ideal)
$I$ of a lattice $L$ is said to be \textit{prime} if $a\wedge b
\in I$ implies $a \in I $ or $b \in I$. Dually, we have concept of
a prime filter (semi-filter). A prime ideal (semi-ideal)[element]
$I$  is a \textit{minimal prime ideal (semi-ideal)[element]} if
there is no prime ideal (semi-ideal)[element] $Q$ such that $
\{0\} \subsetneqq Q \subsetneqq I$. A filter is said to be
\textit{maximal} if it is a maximal element of the poset of
filters.

For $a\in L$, the set $(a]=\{x \in L~|~ x \leq a\}$ is called the
\textit{principal ideal generated by a}. Dually, we have a concept
of a principal filter $[a)$ generated by $a$.

A lattice $L$ is said to be \textit{complete}, if for any subset
$S$ of $L$, we have $\bigvee S,\bigwedge S\in L$.

A complete lattice $L$ is said to be a \textit{multiplicative
lattice}, if there is defined a binary operation $``\cdot"$ called
multiplication on $L$ satisfying the following conditions:
\begin{enumerate} \item $a \cdot b=b \cdot a$, for all $a,b\in L$; \item
$a \cdot(b \cdot c)=(a \cdot b)\cdot c$, for all $a,b,c\in L$;
\item $a \cdot(\vee_{\alpha} b_{\alpha})=\vee_{\alpha}(a \cdot
b_{\alpha})$, for all $a,b_{\alpha}\in L$; \item $a \cdot b \leq a
\wedge b$ for all $a,b\in L$; \item $a \cdot 1=a$, for all $a\in
L$.
\end{enumerate}

An element $ c $ of a complete lattice $ L $ is said to be
\textit{compact}, if $ c \leq \bigvee_{\alpha} a_{\alpha} $
implies that $ c \leq \bigvee _{i=1}^{n} a_{\alpha_{i}} $, where $
n \in \mathbb{Z}^{+} $. The set of all compact elements of a
lattice $ L $ is denoted by $ L_{*} $. A lattice $L$ is said to be
\textit{compactly generated} or \textit{algebraic}, if for every $
x \in L $, there exist $ x_{\alpha} \in L_{*} $, $\alpha \in
\Lambda$ such that $ x = \vee_{\alpha} x_{\alpha} $, that is,
every element is a join of compact elements.

   A multiplicative lattice $L$ is said to be \textit{$1$-compact }, if $1$ is a compact element of $L$.
A multiplicative lattice $L$ is said to be \textit{compact }, if
every element of $L$ is a compact element.

 An element $p \neq 1$ of a multiplicative lattice $L$ is said to be \textit{prime}, if $a
\cdot b \leq p$ implies either $a \leq p$ or $b \leq p$.
Equivalently, an element $p \neq 1$ of a $1$-compact, compactly
generated lattice $L$ is said to be \textit{prime} if $a \cdot b
\leq p$ for $a, b \in L_{*}$ implies either $a \leq p$ or $b \leq
p$.

A non-empty subset $S$ of $L_{*}$ in $1$-compact, compactly
generated lattice is said to be \textit{multiplicatively closed },
if $s_{1},~s_{2} \in S$, then $s_{1} \cdot s_{2} \in S$.

As $L$ is a complete lattice,  it follows that $L$ admits
residuals: for every pair $a, ~b \in L$, there exists an element
$(a:b) = \bigvee \{x \;|\;  x\cdot b \leq a\} \in L$ such that for
any $x \in L$ , $x \cdot b \leq a \Leftrightarrow x \leq (a:b)$.
Clearly, $a \leq (a:b)$ for all $a,~ b \in L$.


 In a multiplicative lattice $L$, an element $a \in L$  is
said to be \textit{nilpotent}, if $a^{n}= 0$, for some $ n \in
\mathbb{Z}^{+}$ and  $L$ is said to be \textit{reduced}, if the
only nilpotent element is 0. The set of all nilpotent elements of $L$ is denoted by $Nil(L)$.

Let $a$ be an element of a multiplicative lattice. We define
$a^*=\bigvee\{x \in L \;|\; a^n \cdot x=0\}$. If $L$ is
reduced, then $a^*=\bigvee\{x \in L \;|\; a \cdot x=0\}$.

 A lattice $L$ with 0 is said to be \textit{0-distributive} if $a
\wedge b=0=a\wedge c$ then $a\wedge (b \vee c)=0$; see Varlet
\cite{var}. The concept of 0-distributive poset can be found in
\cite{jm, jw}.

An ideal $I$ of a lattice $L$ is said to be \textit{semiprime}, if $a\wedge b, a\wedge c \in I$ imply that $a\wedge (b \vee c) \in I$. Note that $I=\{0\}$ is a semiprime ideal of $L$ if and only if $L$ is 0-distributive.
\end{defn}
 Let  $G$ be a graph and $x, y$ be distinct vertices in $G$. We denote by $d(x,y)$  the length of a
shortest path from $x$ to  $y$, if it exists and put
$d(x,y)=\infty$ if no such path exists. The {\it diameter } of $G$, denoted $diam(G)$,
is zero if $G$ is the graph on one vertex and is
$diam(G)=sup\{d(x,y) \;|x$ and $y$ are distinct vertices of $G$\}
otherwise. A {\it cycle } in a graph $G$ is a path that begins and
ends at the same vertex. The {\it girth } of $G$, written $gr(G)$,
is the length of the shortest cycle in $G$ (and $gr(G)=\infty$ if
$G$ has no cycles).  The chromatic number of $G$ is denoted by $\chi(G)$.  Thus, $\chi(G)$ is the minimum number of colors which can be assigned to the elements of $L$ such that adjacent elements receive different colors. If this number is not finite, write $\chi(G) = \infty$.
A subset $C$ of $G$ is a clique if any two distinct vertices of $C$ are adjacent.  If $G$ contains a clique with $n$ elements and every
clique has at most $n$ elements then the clique number of $G$ is
$Clique(G) = n$. If the sizes of the cliques are not bounded, then
$Clique(G) = \infty$.  We always have $\chi(G) \geqslant Clique(G)$.


For undefined concepts in lattices and graphs, see Gr\"atzer
\cite{Gg} and Harary \cite{H} respectively.

\section{Zero-divisor graph of a multiplicative lattice}

Joshi \cite{j} introduced the zero-divisor graph of a poset with
respect to an ideal $I$. We mentioned this definition, when a poset is a lattice.

\begin{defn}\label{2.1.} Let $I$ be an ideal of a  lattice $L$. We associate an undirected and simple graph,
called the {\it zero-divisor graph of $L$ with respect to $I$},
denoted by $\Gamma_I(L)$ in which the set of vertices is
$\{x\not\in I \;|\; x \wedge y \in I$ for some $y\not\in I\}$ and
two distinct vertices $a,b$ are adjacent if and only if $a \wedge
b \in I$. When $I=\{0\}$, then it is simply denoted by $\Gamma(L)$
and in  this case the above definition of Lu and Wu \cite{lw}.

\end{defn}

We illustrate this concept with an example.

\begin{exa}\label{2.2} The lattice  $L$ and its zero divisor graph $\Gamma(L)$ (in the sense of
Joshi \cite{j}) is shown in Figure 1.

\begin{figure}[h]
\unitlength .61mm 
\linethickness{0.4pt}
\ifx\plotpoint\undefined\newsavebox{\plotpoint}\fi 
\begin{picture}(108.266,80.581)(15,10)
\put(33.5,56.581){\circle{3.5}} \put(42.54,71.192){\circle{3.5}}
\put(49.372,47.804){\circle{3.5}} \put(43.25,83.581){\circle{3.5}}
\put(33.5,39.331){\circle{3.5}} \put(42.25,30.081){\circle{3.5}}
\put(42.25,73.831){\line(0,-1){.25}}
\put(41,70.331){\line(-2,-3){8}}
\put(42.75,81.831){\line(0,-1){8.5}}
\put(42.75,73.581){\line(0,-1){.5}}
\put(55.25,48.081){\makebox(0,0)[rc]{c}}
\put(42,24.331){\makebox(0,0)[cb]{0}}
\put(37.5,72.331){\makebox(0,0)[lc]{$d$}}
\put(42.75,90.581){\makebox(0,0)[ct]{1}}
\multiput(43.379,31.469)(.047347107,.121181818){121}{\line(0,1){.121181818}}
\put(33.393,54.751){\line(0,-1){13.612}}
\put(33.393,41.192){\line(0,-1){.053}}
\multiput(34.234,37.775)(.048395683,-.047258993){139}{\line(1,0){.048395683}}
\multiput(43.379,69.729)(.0473,-.168175){120}{\line(0,-1){.168175}}
\put(71.812,31.416){\circle{3.784}}
\put(83.164,48.444){\circle{3.784}}
\put(65.4,48.497){\circle{3.784}}
\multiput(65.19,46.658)(.047211864,-.114915254){118}{\line(0,-1){.114915254}}
\multiput(82.218,46.815)(-.04748731,-.070162437){197}{\line(0,-1){.070162437}}
\put(87.947,48.55){\makebox(0,0)[rc]{$b$}}
\put(71.444,26.581){\makebox(0,0)[cb]{$c$}}
\put(60.879,48.55){\makebox(0,0)[lc]{$a$}}
\put(41.444,17.74){\makebox(0,0)[cb]{$L$}}
\put(39.444,7.5){\makebox(0,0)[cb]{$(a)$}}
\put(71.444,17.74){\makebox(0,0)[cb]{$\Gamma(L)$}}
\put(72.444,7.5){\makebox(0,0)[cb]{$(b)$}}
\put(121.516,28.749){\circle{3.5}} \put(99.189,48){\circle{3.354}}
\put(121.439,48){\circle{3.354}}
\put(99.689,28.75){\circle{3.354}}
\put(119.939,47.25){\line(0,1){.25}}
\put(98.939,23.5){\makebox(0,0)[cb]{$a$}}
\put(120.939,23.5){\makebox(0,0)[cb]{$b$}}
\put(121.439,54.25){\makebox(0,0)[ct]{$c$}}
\put(98.689,53){\makebox(0,0)[ct]{$d$}}
\multiput(100.387,46.838)(.0556396648,-.0474189944){358}{\line(1,0){.0556396648}}
\put(100.755,48.467){\line(1,0){19.025}}
\put(119.465,48.467){\line(1,0){.368}}
\put(99.126,46.365){\line(0,-1){15.977}}
\put(99.126,30.44){\line(0,-1){.105}}
\put(121.199,30.44){\line(0,1){15.977}}
\multiput(120.148,46.995)(-.0515582656,-.0474281843){369}{\line(-1,0){.0515582656}}
\put(101.386,28.443){\line(1,0){18.5}}
\put(101.491,28.391){\line(-1,0){.158}}
\put(108.939,17.5){\makebox(0,0)[cb]{$\Gamma^m(L)$}}
\put(110.939,7.5){\makebox(0,0)[cb]{$(c)$}}
\put(70.939,.5){\makebox(0,0)[cb]{\bf Figure 1}}
\put(29.5,39.25){\makebox(0,0)[cc]{$a$}}
\put(29.75,57.75){\makebox(0,0)[cc]{$b$}}
\end{picture}
\end{figure}

\end{exa}

The following result is essentially proved by D. D. Anderson \cite{AD} for $r$-lattices  guarantees the existence of a sufficient
supply of prime elements in $L$.

\begin{thm}[{D.D. Anderson \cite[Theorem 2.2]{AD}}]\label{1.3.}
Let $L$ be a $1$-compact, compactly generated lattice with $L_{*}$
as a multiplicatively closed set. Suppose $a \in L$ and $ t \nleq
a$ for all $t\in S$, where $S$ be a multiplicatively closed subset
of $L$. Then there is a prime element $p$ of $L$ such that $a \leq
p$ and maximal with respect to $t \nleq p$ for all $t \in S$.
\end{thm}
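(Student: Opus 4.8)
The plan is a Zorn's Lemma argument, directly parallel to the classical proof that an ideal of a ring disjoint from a multiplicatively closed set can be enlarged to a prime ideal maximal with respect to that disjointness. Set
\[
\mathcal{F}=\{x\in L \;:\; a\leq x \text{ and } t\nleq x \text{ for all } t\in S\},
\]
ordered by $\leq$. By hypothesis $a\in\mathcal{F}$, so $\mathcal{F}\neq\emptyset$.

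The first step is to check that $\mathcal{F}$ is closed under joins of chains. Given a chain $C\subseteq\mathcal{F}$, put $c=\bigvee C$, which exists since $L$ is complete; clearly $a\leq c$. If some $t\in S$ had $t\leq c=\bigvee C$, then, because $S\subseteq L_{*}$ (multiplicatively closed subsets are subsets of $L_{*}$ by Definition \ref{1.1}), $t$ is compact, so $t\leq x_{1}\vee\cdots\vee x_{n}$ for finitely many $x_{i}\in C$; as $C$ is a chain this join is the largest $x_{j}$, giving $t\leq x_{j}$, contradicting $x_{j}\in\mathcal{F}$. Hence $c\in\mathcal{F}$, and $c$ is an upper bound for $C$. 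By Zorn's Lemma $\mathcal{F}$ has a maximal element $p$; since $S\neq\emptyset$ there is $t\in S$ with $t\nleq p$, so $p\neq 1$, and by construction $p$ is maximal among elements of $L$ satisfying $a\leq p$ and $t\nleq p$ for all $t\in S$.

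It remains to show $p$ is prime, and here I would use the compact-element characterization of primeness stated in Definition \ref{1.1}. Suppose $b,c\in L_{*}$ with $b\cdot c\leq p$ but $b\nleq p$ and $c\nleq p$. Then $p<p\vee b$ and $p<p\vee c$, so maximality of $p$ forces $p\vee b\notin\mathcal{F}$ and $p\vee c\notin\mathcal{F}$; since $a\leq p\leq p\vee b$ and $a\leq p\leq p\vee c$, the only possible failure of membership is that there are $s,s'\in S$ with $s\leq p\vee b$ and $s'\leq p\vee c$. As $S$ is multiplicatively closed, $s\cdot s'\in S$. Using the distributive law (axiom (3)),
\[
s\cdot s'\leq (p\vee b)\cdot(p\vee c)=p\cdot p \;\vee\; b\cdot p \;\vee\; p\cdot c \;\vee\; b\cdot c ,
\]
and every term on the right is $\leq p$: the first three because $x\cdot y\leq x\wedge y$ and one factor is $p$, the last by hypothesis. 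Thus $s\cdot s'\leq p$ with $s\cdot s'\in S$, contradicting $p\in\mathcal{F}$. Therefore $b\leq p$ or $c\leq p$, so $p$ is prime.

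I do not expect a genuine obstacle. The two places needing care are (i) transporting the property ``avoids $S$'' across a join of a chain, which is exactly where compactness of the elements of $S$ is used, and (ii) the identity $(p\vee b)\cdot(p\vee c)\leq p$, which rests precisely on $p\cdot p\leq p$ and the absorption inequality $x\cdot y\leq x\wedge y$ from the multiplicative lattice axioms.
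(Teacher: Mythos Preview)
Your proof is correct and is exactly the standard Zorn's Lemma argument one expects for this separation theorem. The paper itself does not supply a proof of this statement; it merely cites the result from D.~D.~Anderson \cite[Theorem~2.2]{AD}, so there is no paper proof to compare against. Your handling of the two delicate points---using compactness of elements of $S\subseteq L_{*}$ to pass the avoidance condition through joins of chains, and expanding $(p\vee b)\cdot(p\vee c)$ via axiom~(3) to get the contradiction $s\cdot s'\leq p$---is precisely what is needed, and nothing is missing.
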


A proof of the following corollary follows from the fact that
there exist a compact element $(0 \neq) c \leq a$, as $L$ is
compactly generated and set $S = \{c^{n}\}$ where $ n \in
\mathbb{Z}^{+}$ is a multiplicatively closed set.

\begin{cor}\label{1.4.} Let $L$ be a reduced $1$-compact compactly generated lattice with $L_{*}$ as a
multiplicatively closed set and $a \neq 0 \in L$. Then there is a
prime element $p$ not containing $a$. Moreover, every prime
element contains a minimal prime element.
\end{cor}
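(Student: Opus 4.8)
The plan is to reduce the first assertion to Theorem~\ref{1.3.} and the second to a Zorn's lemma argument. For the first, since $L$ is compactly generated and $a \neq 0$, I would begin by extracting a nonzero compact element $c \leq a$: writing $a$ as a join of compact elements, at least one summand must be nonzero. Because $L$ is reduced, $c$ is not nilpotent, so $c^{n} \neq 0$ for every $n \in \mathbb{Z}^{+}$; and since $c \in L_{*}$ with $L_{*}$ multiplicatively closed, the set $S = \{\, c^{n} : n \in \mathbb{Z}^{+}\,\}$ is a multiplicatively closed subset of $L$ with $t \nleq 0$ for every $t \in S$. Applying Theorem~\ref{1.3.} with the element $0$ in place of $a$ and this $S$ produces a prime element $p$ with $0 \leq p$, maximal with respect to $c^{n} \nleq p$ for all $n$; in particular $c \nleq p$, and since $c \leq a$ this forces $a \nleq p$, which is exactly the claim.

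For the ``moreover'' part, fix a prime element $q$ and let $\mathcal{P}_{q}$ denote the set of all prime elements $r$ of $L$ with $r \leq q$, ordered by the lattice order. I would show $\mathcal{P}_{q}$ has a minimal element and that any such element is a minimal prime element of $L$. Since $q \in \mathcal{P}_{q}$, the set is nonempty, so by Zorn's lemma it suffices to check that every chain $\{p_{\alpha}\}$ in $\mathcal{P}_{q}$ has a lower bound in $\mathcal{P}_{q}$; the candidate is $p = \bigwedge_{\alpha} p_{\alpha}$. Clearly $p \leq q$, and $p \neq 1$ because $p \leq q \neq 1$, so the only thing to verify is that $p$ is prime. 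If $x \cdot y \leq p$ while $x \nleq p$ and $y \nleq p$, pick indices $\alpha, \beta$ with $x \nleq p_{\alpha}$ and $y \nleq p_{\beta}$ and let $p_{\gamma}$ be whichever of $p_{\alpha}, p_{\beta}$ is smaller in the chain; then $x \nleq p_{\gamma}$ and $y \nleq p_{\gamma}$ while $x \cdot y \leq p \leq p_{\gamma}$, contradicting primeness of $p_{\gamma}$. Hence $p \in \mathcal{P}_{q}$. Finally, if $p_{0}$ is a minimal element of $\mathcal{P}_{q}$ and $r$ is any prime element with $r \leq p_{0}$, then $r \leq q$, so $r \in \mathcal{P}_{q}$ and minimality gives $r = p_{0}$; thus $p_{0}$ is a minimal prime element of $L$.

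The only genuinely delicate point is the verification that the meet of a chain of prime elements is again prime, and the key trick there is to pass to the \emph{smaller} element of the two-term subchain $\{p_{\alpha}, p_{\beta}\}$, which converts the separate non-comparabilities $x \nleq p_{\alpha}$ and $y \nleq p_{\beta}$ into the joint non-comparability needed below $p_{\gamma}$. Everything else is routine: the first part is a direct invocation of Theorem~\ref{1.3.} once one checks the multiplicatively-closed hypotheses (here reducedness is used precisely to guarantee $c^{n} \neq 0$), and in the second part the ambient prime $q$ serves only to keep the Zorn argument inside a legitimate partially ordered set.
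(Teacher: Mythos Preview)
Your proof is correct and, for the first assertion, follows exactly the paper's route: the paper's entire proof consists of the remark that one picks a nonzero compact $c \leq a$ and applies Theorem~\ref{1.3.} to the multiplicatively closed set $S = \{c^{n}\}$, which is precisely what you do. For the ``moreover'' clause the paper gives no argument at all, and your Zorn's lemma argument (checking that the meet of a chain of primes is prime by passing to the smaller of two chain members) is the standard and correct way to fill this gap.
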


Now, we introduced the zero-divisor graph $\Gamma^m(L)$ of a
multiplicative lattice $L$ and illustrate with an example.

\begin{defn}\label{2.3.} Let $L$ be a multiplicative lattice and let $i \in L$. We associate an undirected and simple graph, called the {\it
zero-divisor graph of $L$ with respect to an element i}, denoted
by $\Gamma^m_i(L)$ in which the set of vertices is  $\{x(\nleq
i)\in L \;|\; x\cdot y \leq i$ for some $y(\nleq i)\in L\}$ and
two distinct vertices $a,b$ are adjacent if and only if $a \cdot b
\leq i$. Whenever $i=0$, we denote $\Gamma^m_i(L)$ by simply
$\Gamma^m(L)$. In this case, the vertex set of $\Gamma^{m}(L)$ is
called {\it the set of non-zero zero divisors of $L$} and denoted
by $(Z(L))^{*}$. Further,  we denote by $Z(L)=(Z(L))^{*} \cup
\{0\}$.
\end{defn}

\begin{exa}
Consider the lattice $L$ shown in Figure 1$(a)$ with the trivial
multiplication $x \cdot y =0=y \cdot x$, for each $x \neq 1 \neq
y$ and $x \cdot 1 =x=1 \cdot x$ for every $x \in L$. Then it is
easy to see that $L$ is a multiplicative lattice. Further, it's
zero divisor graph $\Gamma^m(L)$ (in the multiplicative lattice
sense) is shown in Figure 1$(c)$. It is interesting to note that
if 1 is completely join-irreducible({\it i.e.} $1=\bigvee x_i
\Rightarrow 1=x_i$ for some $i$) then any lattice with this
trivial multiplication is a multiplicative lattice.
\end{exa}

From the above example, it is clear that $\Gamma(L)$ and $\Gamma^m(L)$ need not be isomorphic. Hence it is natural to ask the following  question.

\noindent{\bf Question 2:} {\it Find a class of multiplicative lattices $L$ for which $\Gamma(L)\cong \Gamma^m(L)$.}

We answer this question in the following result.

\begin{lem}\label{equi} A multiplicative lattice  $L$ is  reduced if and only if
$\Gamma(L)=\Gamma^m(L)$.
\end{lem}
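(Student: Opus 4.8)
The plan is to prove the two implications separately, working throughout with the characterization of vertices and adjacency from Definitions \ref{2.1.} and \ref{2.3.}. The key point is that the vertex sets and edge relations of $\Gamma(L)$ (with $I=\{0\}$) and $\Gamma^m(L)$ (with $i=0$) are literally comparable: since $a\cdot b \leq a\wedge b$ for all $a,b\in L$ by axiom (4) of a multiplicative lattice, any edge of $\Gamma^m(L)$ is an edge of $\Gamma(L)$, and likewise every vertex of $\Gamma^m(L)$ is a vertex of $\Gamma(L)$. So one direction is essentially free, and the whole content is the converse, which is where reducedness enters. Note first that both graphs have the same underlying ground set $L$, so "$\Gamma(L)=\Gamma^m(L)$'' means equality of vertex sets together with equality of the adjacency relations, not merely graph isomorphism.

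For the forward implication, assume $L$ is reduced. I would show $a\wedge b = 0 \iff a\cdot b = 0$ for all $a,b\in L$. The direction $a\cdot b=0 \Rightarrow a\wedge b$ has an edge is immediate; for the reverse, suppose $a\wedge b=0$. Then $(a\cdot b)^2 = a^2\cdot b^2 \leq a\cdot b \leq a\wedge b = 0$ using commutativity, associativity, and axiom (4) repeatedly — more carefully, $a\cdot b \leq a$ and $a\cdot b\leq b$ give $a\cdot b \leq a\wedge b=0$ directly, so in fact $a\cdot b=0$ with no need to square anything. Hence the adjacency relations coincide, and since a vertex in either graph is exactly an element $x\neq 0$ with a partner $y\neq 0$ satisfying the respective product-zero condition, the vertex sets coincide as well. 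Thus $\Gamma(L)=\Gamma^m(L)$.

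For the converse, I would argue contrapositively: suppose $L$ is not reduced, so there is $0\neq a\in L$ with $a^n=0$ for some minimal such $n\geq 2$. Set $b=a^{\,n-1}$; then $b\neq 0$, and $a\cdot b = a^n = 0$, so $a$ and $b$ are adjacent in $\Gamma^m(L)$ (they are distinct since if $a=b=a^{n-1}$ then $a^n=a^2$, and iterating gives $a=a^n=0$ when $n\geq 2$, a contradiction — this little case-check for $n=2$ versus $n>2$ is worth writing out). However, $a\wedge b = a\wedge a^{n-1} = a^{n-1} = b \neq 0$ since $a^{n-1}\leq a$, so $a$ and $b$ are \emph{not} adjacent in $\Gamma(L)$. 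Hence the edge sets differ and $\Gamma(L)\neq\Gamma^m(L)$.

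The only mild subtlety — and the step I would be most careful about — is the bookkeeping for vertex sets versus edge sets: one must confirm that once the two adjacency relations agree (resp. disagree) on all pairs, the conclusion about equality (resp. inequality) of the graphs as labelled graphs on ground set $L$ follows, including that the isolated-vertex/non-vertex distinction is handled correctly. A secondary point to get right is the distinctness of $a$ and $b=a^{n-1}$ in the non-reduced case, which is genuine only because $n$ was chosen minimal. Neither obstacle is deep, but both deserve an explicit sentence so the argument is airtight.
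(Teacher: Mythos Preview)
Your converse direction is essentially the paper's argument, but the forward direction has a genuine gap, and it stems from getting the easy containment backwards at the outset.

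From $a\cdot b \leq a\wedge b$ you conclude that ``any edge of $\Gamma^m(L)$ is an edge of $\Gamma(L)$''. This is the wrong way round: $a\cdot b \leq a\wedge b$ gives $a\wedge b = 0 \Rightarrow a\cdot b = 0$, so edges of $\Gamma(L)$ are automatically edges of $\Gamma^m(L)$, not conversely. Consequently, in your forward argument you prove the trivial direction ($a\wedge b = 0 \Rightarrow a\cdot b = 0$, which holds in \emph{any} multiplicative lattice) and declare the nontrivial direction ``immediate''. It is not: the implication $a\cdot b = 0 \Rightarrow a\wedge b = 0$ is precisely where reducedness must enter. The correct step is
\[
(a\wedge b)^2 \;=\; (a\wedge b)\cdot(a\wedge b) \;\leq\; a\cdot b \;=\; 0,
\]
whence $a\wedge b$ is nilpotent and so $a\wedge b = 0$ since $L$ is reduced. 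Ironically, you wrote down a squaring argument and then discarded it as unnecessary; the squaring is needed, but for the \emph{other} implication and applied to $a\wedge b$ rather than $a\cdot b$. As written, your forward proof never invokes reducedness at all, so it cannot be complete.

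A smaller point: your distinctness check for $a$ and $b=a^{\,n-1}$ does not survive $n=2$. If $n=2$ then $b=a$, and your claim that ``iterating gives $a=a^n=0$'' fails (from $a=a$ you get nothing). The paper's proof has the same soft spot; one clean fix is to note that when $n=2$ the element $a$ is a vertex of $\Gamma^m(L)$ (since $a\cdot a=0$) and then argue separately that it need not be a vertex of $\Gamma(L)$, or simply observe that if $a=b$ one can still compare vertex sets rather than edges.
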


\begin{proof} Let $L$ be a reduced multiplicative lattice, then $\Gamma(L)=\Gamma^m(L)$ follows from the fact that $a.b=0$ if and only
if $a\wedge b=0$ for all $a, b \in L$. Conversely, Suppose that
$\Gamma(L)=\Gamma^m(L)$ and $L$ is not reduced lattice. Then there
exists an element $a (\neq 0) \in L$ such that $a^{n}=0$ with
$a^{n-1} \neq 0$ for some positive integer $n$. Let $b=a^{n-1}$,
then $a \cdot b =0$ gives $a$ and $ b$ is adjacent in
$\Gamma^m(L)$. This gives $a$ and $b$ are adjacent in $\Gamma(L)$.
But then we have $a^{n-1}=b=a \wedge b =0$, a contradiction.
\end{proof}

\begin{lem}\label{a}
Let $L$ be a reduced multiplicative lattice. Then $L$ is a 0-distributive lattice.
\end{lem}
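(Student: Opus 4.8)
The plan is to verify the defining condition of $0$-distributivity directly: given $a, b, c \in L$ with $a \wedge b = 0 = a \wedge c$, I must show $a \wedge (b \vee c) = 0$. The key leverage is the multiplication on $L$ together with reducedness, via the elementary fact (already used in Lemma~\ref{equi}) that in a reduced multiplicative lattice $x \cdot y = 0$ if and only if $x \wedge y = 0$. Indeed, one direction is condition~(4) of Definition~\ref{1.1} ($x \cdot y \leq x \wedge y$), and the converse holds because if $x \wedge y = 0$ then $(x \cdot y)^2 = x^2 \cdot y^2 \leq (x \wedge y)\cdot(x\wedge y) \le x \wedge y = 0$, so $x \cdot y$ is nilpotent, hence $x \cdot y = 0$ since $L$ is reduced.

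Granting this equivalence, the argument is short. From $a \wedge b = 0$ and $a \wedge c = 0$ we get $a \cdot b = 0$ and $a \cdot c = 0$. Now use the infinite distributivity of multiplication over joins, condition~(3): $a \cdot (b \vee c) = (a \cdot b) \vee (a \cdot c) = 0 \vee 0 = 0$. Translating back through the equivalence, $a \cdot (b \vee c) = 0$ yields $a \wedge (b \vee c) = 0$. This is exactly the $0$-distributive law, so $L$ is $0$-distributive.

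I would structure the write-up by first isolating the equivalence ``$x \cdot y = 0 \iff x \wedge y = 0$ in a reduced multiplicative lattice'' as a displayed observation (since it is reused), then applying it twice around a one-line computation with axiom~(3). The only point requiring any care is the nilpotency estimate $(x\cdot y)^2 \le x \wedge y$; this uses commutativity and associativity of the product (conditions~(1)--(2)) and monotonicity of $\cdot$, which follows from condition~(3). There is no real obstacle here — the substance of the lemma is simply that the multiplicative structure lets one convert a meet-theoretic hypothesis into a product, where the join-distributivity axiom is available. If one wanted to avoid even the squaring trick, one could instead note $x \wedge y = 0 \Rightarrow x \cdot y \le x \wedge y = 0$ directly from axiom~(4), which already gives the needed direction $x \wedge y = 0 \Rightarrow x \cdot y = 0$ without invoking reducedness; reducedness is only needed for the reverse implication, which in turn is only needed to pass from $a\cdot(b\vee c)=0$ back to $a \wedge (b \vee c) = 0$.
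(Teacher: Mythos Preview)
Your proof is correct and follows essentially the same route as the paper's: pass from $a\wedge b=0=a\wedge c$ to $a\cdot b=0=a\cdot c$ via axiom~(4), use distributivity of $\cdot$ over $\vee$ to get $a\cdot(b\vee c)=0$, and then use reducedness to return to $a\wedge(b\vee c)=0$. The paper's version is terser, simply invoking ``since $L$ is reduced'' for the last step, whereas you spell out the equivalence $x\cdot y=0\iff x\wedge y=0$ explicitly; but the content is the same.
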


\begin{proof}
Let $a \wedge b=0=a \wedge c$ for $a,b,c \in L$. Then $a.b=0=a.c$.
By the definition of a multiplicative lattice, we have $a. (b\vee
c)=0$. Since $L$ is reduced, this further gives $a \wedge (b \vee
c)=0$. Thus $L$ is 0-distributive.
\end{proof}

The following result follows from the fact that $L$ is commutative
semigroup and the result follows from DeMeyer et. al. \cite{DMS}.
But for the sake of completeness we provide the proof of it.

\begin{thm}\label{2.9.} Let $L$ be a multiplicative lattice, then $\Gamma^{m}(L)$
is connected with diameter $\leq 3$. Furthermore, if
$\Gamma^{m}(L)$ contains a cycle then $gr(\Gamma^{m}(L))\leq
4$.\end{thm}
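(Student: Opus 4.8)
The plan is to mimic the classical argument of Anderson--Livingston and DeMeyer--McKenzie--Schneider, transported verbatim to the multiplicative-lattice setting, using only that $(L,\cdot)$ is a commutative semigroup with absorbing element $0$ and that $x\cdot y\le x\wedge y$. First I would establish connectedness together with $\mathrm{diam}(\Gamma^m(L))\le 3$. Fix distinct vertices $a,b\in (Z(L))^{*}$. By definition there are $x,y\ne 0$ with $a\cdot x=0$ and $b\cdot y=0$. If $a\cdot b=0$ then $d(a,b)=1$. Otherwise, split into the cases according to whether $a\cdot y$, $b\cdot x$, and $x\cdot y$ vanish: if $a\cdot y=0$ (with $y\ne a$, else replace $y$) then $a-y-b$ is a path of length $2$; symmetrically if $b\cdot x=0$; and in the remaining case, where $a\cdot b\ne 0$, $a\cdot y\ne 0$, $b\cdot x\ne 0$, one checks that $x\cdot y\ne 0$ would force (using associativity and commutativity) $x$ to be adjacent to nothing contradicting $x$'s vertexhood, so $x\cdot y=0$ and $a-x-y-b$ is a path of length $3$; the small bookkeeping needed to ensure all four listed vertices are distinct (replacing $x$ or $y$ by $a\wedge x$ etc.\ if they coincide with endpoints) is routine. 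This yields $\mathrm{diam}(\Gamma^m(L))\le 3$.

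For the girth statement, suppose $\Gamma^m(L)$ contains a cycle; I must produce a cycle of length $\le 4$. Take a shortest cycle $C$ and suppose for contradiction its length $n\ge 5$, say $C: a_1-a_2-\cdots-a_n-a_1$. The key move is the standard one: since $a_1\cdot a_2=0$, the element $a_1$ (equivalently $a_1\cdot a_2$-type products) can be used to ``shortcut'' the cycle. Concretely, consider the products $a_1\cdot a_3$, $a_1\cdot a_4$, etc.; if any of these is $0$ we get a chord and hence a shorter cycle, contradiction, so all are nonzero, and similarly all ``non-edge'' products along the cycle are nonzero. Then look at $a_1\cdot a_2=0$ together with $a_3\cdot a_4=0$: multiply to analyze $(a_1\cdot a_3)\cdot(a_2\cdot a_4)$. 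Using commutativity and associativity this equals $(a_1\cdot a_2)\cdot(a_3\cdot a_4)=0$, so $a_1\cdot a_3$ and $a_2\cdot a_4$ are zero divisors killing each other; if they are distinct nonzero vertices they are adjacent, and one traces a short cycle through $a_1,a_3$ on one side and $a_2,a_4$ on the other of length at most $4$, again contradicting minimality. The degenerate subcases ($a_1\cdot a_3=a_2\cdot a_4$, or one of them failing to be a vertex) are handled by the absorbing property of $0$ and by noting such an equality again forces a chord. This forces $n\le 4$.

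The main obstacle is not conceptual but careful: the argument for $\mathrm{diam}\le 3$ and $gr\le 4$ in the ring/semigroup literature (DeMeyer et al.) is phrased for multiplicative semigroups with $0$, and every step above uses only associativity, commutativity, and $0$ as absorbing element — all of which hold in a multiplicative lattice by Definition~\ref{1.1}. The one genuinely lattice-flavored point to verify is that whenever I ``replace'' an auxiliary element $x$ (a witness of zero-divisorhood) by something comparable to it in order to make vertices distinct, the replacement is still a nonzero vertex; here the inequality $x\cdot y\le x\wedge y$ and the fact that any nonzero element below a vertex is itself a vertex (since multiplication is monotone: $z\le x$, $x\cdot w=0$ imply $z\cdot w=0$) do the job. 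Thus the whole proof is a transcription, and I would present it as such, citing \cite{DMS} for the semigroup version and spelling out only the handful of lines where the ordering is invoked.
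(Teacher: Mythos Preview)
Your diameter argument contains a genuine error. In the ``remaining case'' ($a\cdot b\neq 0$, $a\cdot y\neq 0$, $b\cdot x\neq 0$) you assert that $x\cdot y\neq 0$ would force $x$ to be adjacent to nothing; this is simply false, since $x$ is adjacent to $a$ by hypothesis ($a\cdot x=0$). In fact $x\cdot y$ need not vanish in this case: take for instance the ideal lattice of $k[u,v,w,z]/(uv,\,wz)$ with $a=(u)$, $b=(w)$, $x=(v)$, $y=(z)$, where all four products $ab,\,ay,\,bx,\,xy$ are nonzero. The correct move---and the one the paper makes---is that when $x\cdot y\neq 0$ the element $x\cdot y$ itself is a common neighbour of $a$ and $b$, since $a\cdot(x\cdot y)=(a\cdot x)\cdot y=0$ and $b\cdot(x\cdot y)=(b\cdot y)\cdot x=0$; thus $a-(x\cdot y)-b$ is a path of length~$2$. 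Only when $x\cdot y=0$ do you fall back on the length-$3$ path $a-x-y-b$. Your case split is therefore incomplete, and the specific justification you give for the missing case is wrong.

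Your girth argument is salvageable but unnecessarily convoluted, and the ``degenerate subcases'' you wave away are precisely where the work lies. The paper's argument is cleaner: in a shortest cycle $x_1-x_2-\cdots-x_n-x_1$ with $n\ge 5$, minimality gives $x_2\cdot x_4\neq 0$; setting $z=x_2\cdot x_4$ one has $x_1\cdot z\le x_1\cdot x_2=0$ and $x_5\cdot z\le x_5\cdot x_4=0$, so $x_1-z-x_5-\cdots-x_n-x_1$ is a cycle of length $n-2$, contradicting minimality. This single product $x_2\cdot x_4$ does all the work; your route through $(a_1\cdot a_3)\cdot(a_2\cdot a_4)$ introduces two auxiliary vertices and correspondingly more degenerate coincidences to rule out.
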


\begin{proof} Let $x, y$ be distinct vertices in $\Gamma^{m}(L)$. Therefore there exists
$z\neq 0$, $w \neq 0$ with $ x \cdot z = y \cdot w = 0 $. If $x
\cdot y = 0$, then $x - y $ is a path. Now, suppose $x \cdot y \neq 0$. If $w \cdot z =0 $, then $x, y $ are
connected by a path $ x - z - w - y $ of length $\leq 3$. If $w \cdot z \neq 0 $, then $x, y $ are
connected by a path $ x - w \cdot z - y $ of length $= 2$. Thus
$\Gamma^{m}(L)$ is connected and $diam(\Gamma^{m}(L))\leq 3$.
Suppose that $girth(\Gamma^{m}(L)) \neq \infty$. Therefore, there
exists a cycle of minimal length $n$ in $\Gamma^{m}(L)$, say
$x_{1} \rightarrow x_{2} \rightarrow x_{3} \rightarrow \ldots
x_{n} \rightarrow x_{1}$. Let $n \geq 5$. The minimality of $n$
ensures that $x_{2} \cdot x_{4} \neq 0$. Let $(z \neq 0) \leq
x_{2} \cdot x_{4}$. Then, we have $x_{1} \cdot z \leq x_{1} \cdot
x_{2} =0$ and $x_{5} \cdot z \leq x_{5} \cdot x_{4} =0$. It
follows that $x_{1} \rightarrow z \rightarrow x_{5} \rightarrow
\ldots x_{n} \rightarrow x_{1}$ is a cycle of length $n-2$ in
$\Gamma(L)$. This contradicts the minimality of $n$. Therefore, we
have $n=3$ or $4$, which implies that $gr(\Gamma^{m}(L)) =3 ~ or ~
4$. Hence, in all cases we have $gr(\Gamma^{m}(L)) \leq
4$.\end{proof}

\begin{cor}[{Behboodi and Rakeei \cite[Theorem 2.1]{br1}}] \label{2.10.} For every ring $R$, the annihilating-ideal graph $\mathbb{AG}(R)$ is connected and $diam(\mathbb{AG}(R)) \leq 3$.
Moreover, if $\mathbb{AG}(R)$ contains a cycle, then $gr(\mathbb{AG}(R))\leq
4$.\end{cor}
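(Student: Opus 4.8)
}

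The plan is to realize $\mathbb{AG}(R)$ as the zero-divisor graph $\Gamma^{m}(L)$ of a suitable multiplicative lattice and then quote Theorem \ref{2.9.}. Concretely, I would take $L = Id(R)$, the lattice of all ideals of $R$ ordered by inclusion, with join given by ideal sum and meet by intersection. As remarked in the introduction, $Id(R)$ is a complete lattice, and the usual ideal multiplication $I \cdot J = IJ$ satisfies axioms (1)--(5) of Definition \ref{1.1}: it is commutative and associative, it distributes over arbitrary sums $I(\sum_\alpha J_\alpha) = \sum_\alpha IJ_\alpha$, one has $IJ \subseteq I \cap J$, and $I \cdot R = I$ since $R$ is the top element (here the unity of $R$ is used). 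Hence $Id(R)$ is a multiplicative lattice in the sense of Definition \ref{1.1}.

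Next I would check that the graphs coincide. By Definition \ref{2.3.} applied with $i = (0)$, the vertex set of $\Gamma^{m}(Id(R))$ is $\{I \neq (0) \mid IJ = (0) \text{ for some ideal } J \neq (0)\}$, which is exactly $V(\mathbb{AG}(R))$ by Behboodi and Rakeei's definition recalled in the introduction; moreover two distinct vertices $I, J$ are adjacent in $\Gamma^{m}(Id(R))$ iff $I \cdot J = IJ = (0)$, which is precisely the adjacency relation of $\mathbb{AG}(R)$. Therefore $\mathbb{AG}(R) = \Gamma^{m}(Id(R))$ as graphs.

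Finally, Theorem \ref{2.9.} applied to $L = Id(R)$ gives immediately that $\Gamma^{m}(Id(R))$ is connected with $diam(\Gamma^{m}(Id(R))) \leq 3$, and that $gr(\Gamma^{m}(Id(R))) \leq 4$ whenever $\Gamma^{m}(Id(R))$ contains a cycle. Translating back through the identification $\mathbb{AG}(R) = \Gamma^{m}(Id(R))$ yields the statement. There is essentially no obstacle here: the only thing requiring care is the routine verification that $(Id(R), +, \cap, \cdot)$ meets all the conditions of a multiplicative lattice, and that the two vertex sets and adjacency relations genuinely match — both of which are straightforward and were already flagged in the introductory discussion. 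Note that, unlike the main theorems of the paper, this corollary needs neither reducedness nor the compactness hypotheses, since Theorem \ref{2.9.} holds for an arbitrary multiplicative lattice.
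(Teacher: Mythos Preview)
Your proposal is correct and matches the paper's approach: the corollary is stated immediately after Theorem \ref{2.9.} with no separate proof, the intended argument being exactly the identification $\mathbb{AG}(R)=\Gamma^{m}(Id(R))$ already flagged in the introduction (and in Remark \ref{b1}) together with an application of Theorem \ref{2.9.}. Your additional remark that neither reducedness nor compactness hypotheses are needed here is also accurate.
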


The following theorem is proved by  Alizadeh, Maimani, Pournaki
and Yassemi \cite{ali2} and Joshi \cite{j} for posets. We quote
this result when the poset is a lattice.

\begin{thm}[{\cite[Theorem 2.14 ]{j} \cite[Theorem 3.2]{ali2}}] \label{2.4.}Let $L$ be a lattice with 0. Then the following statements are equivalent.

 \begin{enumerate} \item  There exist non-zero minimal prime
semi-ideals $P_{1}$ and $P_{2}$ of $L$ such that $P_{1} \cap
P_{2}={0}$. \item The zero-divisor graph $\Gamma(L)$ is a complete
bipartite graph. \item  The zero-divisor graph $\Gamma(L)$ is a
bipartite graph.\end{enumerate}\end{thm}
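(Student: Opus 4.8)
The plan is to establish the cycle of implications $(1)\Rightarrow(2)\Rightarrow(3)\Rightarrow(1)$; the middle implication is immediate since a complete bipartite graph is bipartite, and throughout we may assume $\Gamma(L)$ is non-empty, i.e.\ has an edge (if $\Gamma(L)=\emptyset$ the equivalences are only a matter of convention). For $0\neq a\in L$ I will use the semi-ideal $a^{\perp}:=\{x\in L\mid x\wedge a=0\}$, which is a down-set and is proper because $1\wedge a=a\neq 0$.

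For $(1)\Rightarrow(2)$, let $P_{1},P_{2}$ be the given non-zero minimal prime semi-ideals with $P_{1}\cap P_{2}=\{0\}$, and put $V_{i}=P_{i}\setminus\{0\}$, so $V_{1}\cap V_{2}=\emptyset$ and $V_{1},V_{2}\neq\emptyset$. The key computation is that for $x\in V_{1}$ and $y\in V_{2}$ one has $x\wedge y\le x\in P_{1}$ and $x\wedge y\le y\in P_{2}$, hence $x\wedge y\in P_{1}\cap P_{2}=\{0\}$; since also $x\neq y$, this says $x$ is adjacent to $y$. That simultaneously gives $V_{1}\cup V_{2}\subseteq V(\Gamma(L))$ and all the cross-edges. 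Conversely, if $x$ is any vertex, pick $y\neq 0$ with $x\wedge y=0\in P_{1}\cap P_{2}$; primeness of $P_{1}$ and of $P_{2}$ forces $x\in P_{1}\cup P_{2}$ (otherwise $y\in P_{1}\cap P_{2}=\{0\}$), so $V(\Gamma(L))=V_{1}\sqcup V_{2}$. Finally, no two vertices of $V_{1}$ are adjacent: $a\wedge b=0$ with $a,b\in V_{1}$ would put $a\wedge b$ in the prime $P_{2}$, forcing $a$ or $b$ into $P_{1}\cap P_{2}=\{0\}$; similarly for $V_{2}$. Hence $\Gamma(L)$ is the complete bipartite graph with parts $V_{1},V_{2}$. (In fact only $P_{i}\neq\{0\}$ is used here, not minimality.)

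For $(3)\Rightarrow(1)$ I use only that a bipartite graph is triangle-free, so $\Gamma(L)$ contains no $K_{3}$. Fix an edge $ab$ (so $a\wedge b=0$, $a,b\neq 0$) and set $P_{1}=a^{\perp}$, $P_{2}=b^{\perp}$; these are proper semi-ideals, non-zero since $b\in a^{\perp}$ and $a\in b^{\perp}$. If $0\neq x\in a^{\perp}\cap b^{\perp}$, then $x,a,b$ are three distinct vertices with all pairwise meets $0$ --- a triangle, contradiction; so $a^{\perp}\cap b^{\perp}=\{0\}$. To show $a^{\perp}$ is prime, suppose $u\wedge v\in a^{\perp}$ while $u\notin a^{\perp}$ and $v\notin a^{\perp}$; then $c:=u\wedge a$ and $d:=v\wedge a$ are non-zero, $c\wedge d=(u\wedge v)\wedge a=0$, and since $c,d\le a$ we get $c\wedge b\le a\wedge b=0$ and likewise $d\wedge b=0$; thus $\{b,c,d\}$ spans a triangle --- again a contradiction. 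By symmetry $b^{\perp}$ is prime. Finally, a routine Zorn's-lemma argument (an intersection of a chain of prime semi-ideals is again a proper prime semi-ideal) yields minimal prime semi-ideals $P_{1}'\subseteq a^{\perp}$ and $P_{2}'\subseteq b^{\perp}$; then $P_{1}'\cap P_{2}'\subseteq a^{\perp}\cap b^{\perp}=\{0\}$, and neither is $\{0\}$ because $\{0\}$ is not a prime semi-ideal (witness $a\wedge b=0$ with $a,b\neq 0$). This is exactly $(1)$.

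The hard part --- and essentially the only step beyond meet-manipulation --- is showing that each annihilator $a^{\perp}$ is prime in $(3)\Rightarrow(1)$. The mechanism is that triangle-freeness of $\Gamma(L)$, together with the idempotence $a\wedge a=a$, is exactly what forbids a primeness-failure witness $(u,v)$, since such a witness produces the triangle $\{b,\,u\wedge a,\,v\wedge a\}$. The remaining ingredients --- the meet identities, the identification of the bipartition in $(1)\Rightarrow(2)$, and the existence of a minimal prime below a given prime --- are routine. I would also remark that the same argument shows that for $\Gamma(L)$ the properties \emph{triangle-free}, \emph{bipartite} and \emph{complete bipartite} all coincide.
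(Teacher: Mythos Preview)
The paper does not supply a proof of this theorem; it is quoted from \cite{j} and \cite{ali2} as a known result for posets, specialized to lattices. There is therefore nothing in the present paper to compare against directly.

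Your argument is correct and self-contained. The implication $(1)\Rightarrow(2)$ is handled exactly as one expects: the bipartition is $P_1\setminus\{0\}$ and $P_2\setminus\{0\}$, and primeness of the $P_i$ together with $P_1\cap P_2=\{0\}$ forces both the absence of internal edges and the presence of all cross-edges. For $(3)\Rightarrow(1)$ your key device---showing that for any edge $ab$ the annihilator semi-ideals $a^{\perp}$ and $b^{\perp}$ are prime and meet only in $\{0\}$, using triangle-freeness to rule out a primeness failure via the auxiliary triangle $\{b,\,u\wedge a,\,v\wedge a\}$---is precisely the mechanism in the cited sources. The Zorn descent to minimal primes is routine (properness of the intersection of a chain is witnessed by $a\notin a^{\perp}$), and your observation that $\{0\}$ is not prime because $a\wedge b=0$ with $a,b\neq 0$ correctly secures the ``non-zero'' clause in~(1). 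Your closing remark that for $\Gamma(L)$ the conditions \emph{triangle-free}, \emph{bipartite}, and \emph{complete bipartite} coincide is a useful strengthening worth recording.
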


\begin{lem}\label{2.4a.}Let $L$ be a reduced multiplicative
lattice. If $p_{1}$ is minimal prime element in $L$, then
$(p_{1}]$ is prime ideal of $L$.\end{lem}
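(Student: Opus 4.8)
The plan is to verify the two requirements of a prime ideal for $(p_1]$ separately, and to observe that almost all of the work is done by axiom~(4) in the definition of a multiplicative lattice. First I would note that $(p_1]$ is an ideal of $L$ for purely order-theoretic reasons: it is a semi-ideal because it is downward closed by definition of a principal ideal, and it is closed under binary joins since $a,b\le p_1$ forces $a\vee b\le p_1$ (as $p_1$ is an upper bound of $a$ and $b$). It is moreover a \emph{proper} ideal, because $p_1\ne 1$ is part of the definition of a prime element, so $1\notin(p_1]$.

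The substantive step is the prime condition. Suppose $a\wedge b\in(p_1]$, that is, $a\wedge b\le p_1$; I must deduce $a\in(p_1]$ or $b\in(p_1]$, i.e.\ $a\le p_1$ or $b\le p_1$. Here I would invoke axiom~(4), namely $a\cdot b\le a\wedge b$, which yields $a\cdot b\le a\wedge b\le p_1$. Since $p_1$ is a prime element of the multiplicative lattice $L$, the inequality $a\cdot b\le p_1$ forces $a\le p_1$ or $b\le p_1$, which is exactly what is required. Hence $a\wedge b\in(p_1]$ implies $a\in(p_1]$ or $b\in(p_1]$, and together with properness this shows $(p_1]$ is a prime ideal of $L$.

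I do not expect a genuine obstacle here; the only point requiring a moment's care is confirming properness, which is immediate since a prime element is by definition distinct from $1$. It is worth remarking that the argument uses nothing beyond the fact that $p_1$ is a prime element of $L$: neither the minimality of $p_1$ among prime elements nor the hypothesis that $L$ is reduced is actually needed for this particular conclusion. Those assumptions are carried along because the lemma is a stepping stone toward identifying $(p_1]$ with a minimal prime (semi-)ideal and then applying Theorem~\ref{2.4.} in the reduced setting, where by Lemma~\ref{equi} and Lemma~\ref{a} the multiplicative and order-theoretic pictures coincide.
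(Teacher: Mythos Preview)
Your proof is correct and follows essentially the same approach as the paper: both use axiom~(4), $a\cdot b\le a\wedge b$, together with the primeness of $p_1$ to pass from $a\wedge b\le p_1$ to $a\le p_1$ or $b\le p_1$. Your version is simply more thorough in explicitly verifying that $(p_1]$ is a proper ideal, and your remark that neither reducedness nor minimality is actually used is a valid observation.
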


\begin{proof}  Suppose, $a \wedge b
\in (p_{1}]$, then $a \cdot b \leq a \wedge b \leq p_{1}$. As
$p_{1}$ is prime, $a \leq p_{1}$ or $b \leq p_{1}$ this gives $a
\in (p_{1}]$ or $b \in (p_{1}]$. Hence $(p_{1}]$ is prime
ideal.\end{proof}

\begin{thm}\label{2.5.} Let $L$ be a reduced multiplicative lattice. Then $\Gamma^m(L)$ is a complete
bipartite graph if and only if there exist minimal prime elements
$p_{1}$ and $p_{2}$ such that $p_{1} \wedge p_{2} = 0$.\end{thm}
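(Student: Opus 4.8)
The plan is to reduce this to the already-quoted Theorem \ref{2.4.} by using Lemma \ref{equi}, which tells us that for a reduced multiplicative lattice $L$ we have $\Gamma(L)=\Gamma^m(L)$, and Lemma \ref{a}, which tells us $L$ is $0$-distributive. Under the translation provided by Theorem \ref{2.4.}, "complete bipartite" for $\Gamma(L)$ is equivalent to the existence of two nonzero minimal prime \emph{semi-ideals} $P_1,P_2$ with $P_1\cap P_2=\{0\}$; so the real content here is to pass between minimal prime \emph{semi-ideals} of the lattice $L$ and minimal prime \emph{elements} of the multiplicative lattice $L$, and to match up the condition $P_1\cap P_2=\{0\}$ with $p_1\wedge p_2=0$.

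For the forward direction, suppose $\Gamma^m(L)$ is complete bipartite. By Lemma \ref{equi}, $\Gamma(L)=\Gamma^m(L)$ is complete bipartite, so by Theorem \ref{2.4.} there are nonzero minimal prime semi-ideals $P_1,P_2$ of $L$ with $P_1\cap P_2=\{0\}$. I would then argue that each $P_j$, being a minimal prime semi-ideal of a $0$-distributive lattice, is in fact a principal ideal $(p_j]$ for some element $p_j$, and that $p_j$ is a minimal prime \emph{element} of the multiplicative lattice $L$; the condition $P_1\cap P_2=\{0\}$ becomes $(p_1]\cap(p_2]=\{0\}$, i.e. $p_1\wedge p_2=0$. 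The delicate point is showing that a minimal prime semi-ideal is principal: this uses that in a reduced lattice the complement $L\setminus P_j$ is (up to the standard semigroup/filter argument) a maximal multiplicatively closed-type set, together with Corollary \ref{1.4.} and Theorem \ref{1.3.} to produce prime elements, so that $P_j$ must coincide with $(p_j]$ for the corresponding prime element $p_j$; minimality of the semi-ideal forces minimality of the element. For the converse, given minimal prime elements $p_1,p_2$ with $p_1\wedge p_2=0$, Lemma \ref{2.4a.} says $(p_1]$ and $(p_2]$ are prime ideals, and one checks they are minimal prime semi-ideals with $(p_1]\cap(p_2]=\{0\}$; then Theorem \ref{2.4.} gives that $\Gamma(L)$ is complete bipartite, and Lemma \ref{equi} gives $\Gamma^m(L)=\Gamma(L)$.

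The main obstacle I anticipate is the correspondence "minimal prime semi-ideal $\leftrightarrow$ principal ideal generated by a minimal prime element." Getting from a prime element $p$ to the prime ideal $(p]$ is immediate (Lemma \ref{2.4a.}), but the reverse — showing an \emph{arbitrary} minimal prime semi-ideal $P$ equals $(p]$ for a suitable minimal prime element — requires care about the interplay between the order-theoretic notion (prime semi-ideal of the lattice) and the multiplicative notion (prime element). I would handle this by taking any nonzero $a\in P$, using reducedness to replace $a\wedge b$ by $a\cdot b$ throughout, invoking Corollary \ref{1.4.}/Theorem \ref{1.3.} to extend the complement of $P$ to a prime element $p$, and then using minimality of $P$ to force $P=(p]$ and $p$ minimal. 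A secondary subtlety is making sure the semi-ideal $P_j$ produced by Theorem \ref{2.4.} is genuinely $\{0\}$-disjoint from its partner after passing to principal ideals; this is essentially bookkeeping once the principal description is in hand.
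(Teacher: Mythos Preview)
Your converse direction is essentially the paper's: pass from minimal prime elements $p_1,p_2$ to the prime ideals $(p_1],(p_2]$ via Lemma~\ref{2.4a.}, descend to minimal prime (semi-)ideals with trivial intersection, and invoke Theorem~\ref{2.4.} together with Lemma~\ref{equi}.

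Your forward direction, however, has a genuine gap. You plan to apply Theorem~\ref{2.4.} to obtain minimal prime semi-ideals $P_1,P_2$ with $P_1\cap P_2=\{0\}$ and then argue that each $P_j$ is principal, equal to $(p_j]$ for some minimal prime \emph{element} $p_j$. To do this you invoke Theorem~\ref{1.3.} and Corollary~\ref{1.4.}, but those results require $L$ to be $1$-compact and compactly generated with $L_*$ multiplicatively closed---hypotheses \emph{not} assumed in Theorem~\ref{2.5.}. Without them there is no guarantee that prime elements exist at all, let alone that a given minimal prime semi-ideal is of the form $(p]$. Even granting compactness, the step ``$L\setminus P_j$ is a maximal multiplicatively closed set'' is problematic: primality of $P_j$ as a semi-ideal says $s,t\notin P_j\Rightarrow s\wedge t\notin P_j$, but since $s\cdot t\le s\wedge t$ and $P_j$ is downward closed, you cannot conclude $s\cdot t\notin P_j$ in general.

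The paper avoids this entirely by working directly with the bipartition: it sets $p_1=\bigvee V_1$, $p_2=\bigvee V_2$, uses distributivity of multiplication over arbitrary joins to get $p_1\cdot p_2=0$ (hence $p_1\wedge p_2=0$ by reducedness), and then proves $p_1$ is a prime \emph{element} by a short contradiction argument using an arbitrary $c\in V_2$. No compactness and no semi-ideal/element dictionary are needed. If you want to salvage your route, note that the semi-ideals coming out of Theorem~\ref{2.4.} are exactly $V_1\cup\{0\}$ and $V_2\cup\{0\}$, so taking their joins lands you back at the paper's $p_1,p_2$; but you will still need the paper's direct primality argument rather than the separation theorems.
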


\begin{proof} Suppose that $\Gamma^m(L)$ is a complete bipartite graph with
parts $V_{1}$ and $V_{2}$. Set $p_{1} = \bigvee V_{1}=
\bigvee\{x~|~ x \in V_1~\}$ and $p_{2} = \bigvee V_{2}$. For any $x_{p_2}\in V_2$, we have $x_{p_2} \cdot x_{p_1}=0$ for every $x_{p_1}\in V_1$. By the definition of a multiplicative lattice, $x_{p_2}\cdot (\bigvee_{x_{p_1}\in V_1} x_{p_1})=0$. This further yields $p_1\cdot p_2=0$. Since  $L$ is  reduced, $p_{1} \wedge p_{2} = 0$.

   Now, we prove $p_{1}$ is a prime element of $L$. Suppose, $a \cdot b \leq p_{1}$
and $a, b \nleq p_{1}$. Then $a, b \not\in V_{1}$ and $a, b \neq
0$. We claim that $a \cdot b \cdot c = 0$ for $c \in V_{2}$. As, $
a \cdot b \leq p_{1} $ and $c \leq p_{2}$ gives $ a \cdot b \cdot
c \leq  p_{1} \wedge p_{2}$. Therefore, $a \cdot b \cdot c = 0$
for any $c \in V_{2}$.

   Also, $b \cdot c \neq 0$. If possible, $b \cdot c =0$ then $ b\in
V_{1}$ (as $c \in V_{2} $), a contradiction to $b \not\in V_{1}$.
Thus $b \cdot c \neq 0$. This together with $a \cdot b \cdot c =
0$, we  get $b\cdot c \in V_{1}$ or $b\cdot c \in V_{2}$. If
$b\cdot c \in V_{2}$ then $a \in V_{1}$, again a contradiction.
Therefore $b\cdot c \in V_{1}$. Since $c\in V_{2}$, we have
$b\cdot c^{2}=0$ which further yields that $(b\cdot c)^{2}=0$.
Since $L$ is reduced, we have $b\cdot c=0$, again a contradiction. Thus
$p_{1}$ is a prime element of $L$. Similarly, we can show $p_{2}$
is a prime element of $L$. It is clear that every prime element contains a minimal prime element, we may assume that $p_{1}$ and $p_{2}$
are minimal prime elements with $p_1 \wedge p_2=0$.

    Conversely, suppose that there exist minimal prime elements
$p_{1}$ and $p_{2}$ such that $p_{1} \wedge p_{2} = 0$. By Lemma
\ref{2.4a.}, $(p_{1}]$ and $(p_{2}]$ are prime ideals of $L$. Clearly, $(p_{1}] \cap (p_{2}]=0$.
Since $L$ has zero, there exist minimal prime ideals $Q \subseteq
(p_{1}]$ and $R \subseteq (p_{2}]$ such that $Q \cap R =0$. By
Theorem \ref{2.4.}, the zero-divisor graph $\Gamma(L)$ is a
complete bipartite graph and hence the zero-divisor graph
$\Gamma^{m}(L)$ is a complete bipartite graph, by Lemma \ref{equi}.
\end{proof}

\begin{thm}\label{2.7.} Let $L$ be a reduced multiplicative lattice. Then following statements are equivalent:
\begin{enumerate} \item  There exist non-zero minimal prime
ideals $P_{1}$ and $P_{2}$ of $L$ such that $P_{1} \cap
P_{2}={0}$. \item  The zero-divisor graph $\Gamma(L)$ is a
complete bipartite graph.
\item  The zero-divisor graph $\Gamma(L)$ is a
 bipartite graph.
\item  The zero-divisor graph
$\Gamma^{m}(L)$ is a complete bipartite graph. \item There exist
nonzero minimal prime elements $p_{1}$ and $p_{2}$ of $L$ such
that $p_{1} \wedge p_{2}={0}$.\end{enumerate}\end{thm}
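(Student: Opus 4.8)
The plan is to prove the theorem by the cyclic chain $(1)\Rightarrow(2)\Rightarrow(3)\Rightarrow(4)\Rightarrow(5)\Rightarrow(1)$, exploiting the three tools already at hand: Lemma \ref{equi} (which, since $L$ is reduced, makes $\Gamma(L)$ and $\Gamma^{m}(L)$ literally the same graph), Theorem \ref{2.4.} (for $\Gamma(L)$, ``bipartite'' already forces ``complete bipartite''), and Theorem \ref{2.5.} (the element-level description of when $\Gamma^{m}(L)$ is complete bipartite). With these, three of the five links are routine: $(2)\Rightarrow(3)$ holds because complete bipartite graphs are bipartite; $(3)\Rightarrow(4)$ holds because by Lemma \ref{equi} the bipartite graph $\Gamma(L)$ equals $\Gamma^{m}(L)$ and by Theorem \ref{2.4.} a bipartite $\Gamma(L)$ is in fact complete bipartite; and $(4)\Rightarrow(5)$ is the forward implication of Theorem \ref{2.5.} --- here one remarks that a complete bipartite $\Gamma^{m}(L)$ has an edge, so $L$ has a non-zero zero divisor, so $0$ is not a prime element and the minimal prime elements delivered by Theorem \ref{2.5.} are automatically non-zero, matching $(5)$. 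The substance of the theorem is therefore carried by the two links mentioning minimal prime \emph{ideals}, namely $(1)\Rightarrow(2)$ and $(5)\Rightarrow(1)$.

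For $(1)\Rightarrow(2)$ I would work directly with the graph rather than try to turn prime ideals into prime semi-ideals for Theorem \ref{2.4.}. Let $P_{1},P_{2}$ be non-zero prime ideals with $P_{1}\cap P_{2}=\{0\}$ and let $V$ denote the vertex set of $\Gamma(L)$. One checks, in turn: (i) $V\neq\emptyset$, indeed for $0\neq a\in P_{1}$, $0\neq b\in P_{2}$ we have $a\wedge b\in P_{1}\cap P_{2}=\{0\}$ and $a\neq b$ (no non-zero element lies in $P_{1}\cap P_{2}$), so $a,b$ are adjacent vertices; (ii) $V\subseteq P_{1}\cup P_{2}$, since if $x\wedge y=0$ with $y\neq 0$ and $x\notin P_{1}\cup P_{2}$ then primeness of $P_{1}$ and of $P_{2}$ puts $y$ into $P_{1}\cap P_{2}=\{0\}$, impossible; (iii) $A:=V\cap P_{1}$ and $B:=V\cap P_{2}$ partition $V$, because $0\notin V$, and both are non-empty by (i); (iv) there is no edge inside $A$ (nor inside $B$): if $x,z\in A$ with $x\wedge z=0$, then $x,z$ being non-zero forces $x,z\notin P_{2}$, contradicting primeness of $P_{2}$; (v) $A$ and $B$ are completely joined: for $x\in A$, $z\in B$ the semi-ideal property gives $x\wedge z\le x\in P_{1}$ and $x\wedge z\le z\in P_{2}$, hence $x\wedge z\in P_{1}\cap P_{2}=\{0\}$. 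So $\Gamma(L)$ is complete bipartite with parts $A,B$. (Note this argument uses only that $P_{1},P_{2}$ are non-zero prime semi-ideals with zero intersection, not their minimality.)

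For $(5)\Rightarrow(1)$ I would recycle the argument from the converse half of Theorem \ref{2.5.}: if $p_{1},p_{2}$ are non-zero minimal prime elements with $p_{1}\wedge p_{2}=0$, then by Lemma \ref{2.4a.} the principal ideals $(p_{1}]$ and $(p_{2}]$ are prime ideals, they are non-zero (each contains $p_{i}$), and $(p_{1}]\cap(p_{2}]=(p_{1}\wedge p_{2}]=\{0\}$. Since $L$ has a least element, each contains a minimal prime ideal --- $Q\subseteq(p_{1}]$, $R\subseteq(p_{2}]$ --- because the intersection of a chain of proper prime ideals of $L$ is again a proper prime ideal, so Zorn's Lemma applies; then $Q\cap R\subseteq(p_{1}]\cap(p_{2}]=\{0\}$. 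Finally $Q,R$ are non-zero: were $Q=\{0\}$ a prime ideal, $L$ would have no non-zero zero divisors, contradicting $p_{1}\wedge p_{2}=0$ with $p_{1},p_{2}\neq 0$. Hence $(1)$ holds, closing the cycle. The only thing that really needs care --- and the place I expect to have to be fussy --- is bookkeeping across the three incarnations of ``minimal prime'' (semi-ideal, ideal, element) together with the degenerate possibility that $\{0\}$ itself is prime; once the non-zero hypotheses are tracked faithfully, no individual step is difficult.
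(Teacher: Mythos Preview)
Your proof is correct and follows essentially the same route as the paper, which simply writes ``follows from Theorem \ref{2.4.} and Theorem \ref{2.5.}'' --- i.e., exactly the three tools (Lemma \ref{equi}, Theorem \ref{2.4.}, Theorem \ref{2.5.}) you identify. Your version is in fact more careful in one respect: the paper's one-line proof silently identifies item (1) (minimal prime \emph{ideals}) with condition (1) of Theorem \ref{2.4.} (minimal prime \emph{semi}-ideals), whereas you sidestep that gap entirely by giving a direct self-contained argument for $(1)\Rightarrow(2)$ and by recycling the converse half of Theorem \ref{2.5.} for $(5)\Rightarrow(1)$.
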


\begin{proof} follows from the Theorem \ref{2.4.} and
Theorem \ref{2.5.}.\end{proof}

\begin{rem}\label{b1} It is  known that if $R$ is a reduced (non-reduced)
commutative ring with unity. Then $Id(R)$, the ideal lattice of
$R$, is reduced (non-reduced) multiplicative lattice which is
$1$-compact and compactly generated. Further if $R$ is a reduced
commutative ring with unity and $L=Id(R)$, then $P$ is a minimal prime ideal of $R$ if and only if $P$
is a minimal prime element of $L$.
\end{rem}

From Theorem \ref{2.7.} and  Remark \ref{b1}, we have the  following result.

\begin{cor}[{Behboodi and Rakeei \cite[Corollary 2.5]{br2}}]\label{2.8.} Let $R$ be a reduced ring. Then the
following statements are equivalent:
\begin{enumerate}
 \item $\mathbb{AG}(R)$ is a complete bipartite graph
with two non-empty parts. \item $R$ has exactly two minimal
primes.\end{enumerate}\end{cor}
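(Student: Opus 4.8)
The plan is to derive Corollary \ref{2.8.} as an immediate consequence of Theorem \ref{2.7.} together with the translation in Remark \ref{b1}. First I would recall that for a reduced commutative ring $R$ with unity, the ideal lattice $L = Id(R)$ is a reduced, $1$-compact, compactly generated multiplicative lattice in which the multiplication is the usual product of ideals and the meet is intersection. Under this identification the annihilating-ideal graph $\mathbb{AG}(R)$ is precisely the zero-divisor graph $\Gamma^m(L)$: a nonzero ideal $I$ is a vertex of $\mathbb{AG}(R)$ exactly when there is a nonzero ideal $J$ with $IJ = (0)$, which is the vertex condition defining $(Z(L))^*$, and adjacency in both graphs is $IJ = (0)$, i.e. $I \cdot J \leq 0$ in $L$.

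Next I would invoke the last clause of Remark \ref{b1}: for reduced $R$, the minimal prime ideals of $R$ coincide with the minimal prime elements of $L = Id(R)$. Consequently, ``$R$ has exactly two minimal primes'' is equivalent to ``$L$ has exactly two minimal prime elements,'' and since $L$ is reduced (hence $0$-distributive by Lemma \ref{a}), having exactly two minimal primes is the same as the existence of two distinct, nonzero minimal prime elements $p_1, p_2$ with $p_1 \wedge p_2 = 0$ (in a reduced lattice with $0$, the meet of all minimal primes is $0$, so two distinct minimal primes already force their meet to be $0$, and conversely two minimal primes meeting in $0$ with any third minimal prime $p_3$ would give $p_3 = p_3 \wedge 0 = 0$, which is impossible since $p_3$ is a proper nonzero element).

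Then I would simply apply Theorem \ref{2.7.}, specifically the equivalence of statement (4), ``$\Gamma^m(L)$ is a complete bipartite graph,'' with statement (5), ``there exist nonzero minimal prime elements $p_1, p_2$ of $L$ with $p_1 \wedge p_2 = 0$.'' Combining the graph-level identification $\mathbb{AG}(R) = \Gamma^m(L)$ with the prime-level identification from Remark \ref{b1}, statement (1) of the corollary translates into statement (4) of Theorem \ref{2.7.}, and statement (2) of the corollary translates (via the observation in the previous paragraph) into statement (5). Hence the two statements of the corollary are equivalent.

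The only genuine subtlety — the ``main obstacle,'' though it is minor — is the matching of the bipartition sizes: Theorem \ref{2.7.}(4) speaks of a complete bipartite graph, while Corollary \ref{2.8.}(1) insists on \emph{two non-empty} parts. Here one should note that the construction of the parts in the proof of Theorem \ref{2.5.} produces $V_1, V_2$ whose joins are distinct nonzero minimal primes $p_1 \neq p_2$; since $p_1 \neq 0 \neq p_2$ and $p_1 \neq p_2$, each part must contain a vertex (for instance, a nonzero compact element below $p_1$ not below $p_2$, using Corollary \ref{1.4.}), so both parts are automatically non-empty. Thus the phrase ``with two non-empty parts'' is not an extra hypothesis but a feature of the reduced setting, and the equivalence goes through cleanly.
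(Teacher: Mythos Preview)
Your approach is exactly the paper's: the corollary is stated as an immediate consequence of Theorem \ref{2.7.} together with Remark \ref{b1}, and you have simply unpacked that one-line deduction in detail.

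One small correction: in your argument that $p_1\wedge p_2=0$ precludes a third minimal prime, the line ``$p_3=p_3\wedge 0=0$'' is not valid (the meet $p_3\wedge 0$ is always $0$, but that says nothing about $p_3$). The correct step is the one used elsewhere in the paper: from $p_1\cdot p_2\leq p_1\wedge p_2=0\leq p_3$ and primality of $p_3$ one gets $p_1\leq p_3$ or $p_2\leq p_3$, contradicting minimality of $p_3$. With that fix your write-up is fine.
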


\begin{lemma}[{Alarcon, Anderson, Jayaram \cite[Lemma 3.5]{AAJ}}]\label{mini} Let $L$ be a $1$-compact, compactly generated lattice with $L_{*}$as a  multiplicatively closed  set and let $p$ be a prime element of $L$. Then $p$ is a minimal prime element of $L$ if and only if for any compact element $x \leq p$ there exists a compact element $y \not\leq p$ such that $x^n \cdot y=0$ for some positive integer $n$.
\end{lemma}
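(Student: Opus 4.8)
The plan is to adapt the classical characterization of minimal prime ideals of a commutative ring, with Theorem~\ref{1.3.} playing the role of the ring-theoretic fact that a multiplicatively closed set avoiding $0$ misses some prime. Throughout I read $x^{0}$ as $1$, which is compact since $L$ is $1$-compact, and I use freely that a finite product of compact elements is compact because $L_*$ is multiplicatively closed.

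For the ``if'' direction, assume the nilpotence condition and let $q$ be an arbitrary prime element with $q \le p$; the goal is $q = p$. Since $L$ is compactly generated, $p = \bigvee\{x \in L_* : x \le p\}$, so it suffices to show $x \le q$ for every compact $x \le p$. Given such an $x$, choose a compact $y \nleq p$ with $x^{n} \cdot y = 0 \le q$. As $q$ is prime, $x^{n} \le q$ or $y \le q$; the latter is impossible because $y \nleq p$ and $q \le p$, so $x^{n} \le q$, and a short induction on $n$ (write $x^{n} = x \cdot x^{n-1}$ and apply primeness) gives $x \le q$. Hence $p \le q$, so $q = p$, and $p$ is a minimal prime element.

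For the ``only if'' direction, assume $p$ is a minimal prime element, fix a compact $x \le p$, and put
\[
S \;=\; \{\, s \cdot x^{n} \;:\; s \in L_{*},\ s \nleq p,\ n \ge 0 \,\}.
\]
The first task is to check that $S$ is a multiplicatively closed subset of $L_*$: each member is compact by closure of $L_*$ under products, and $(s_1 x^{n_1})(s_2 x^{n_2}) = (s_1 s_2)\,x^{n_1+n_2}$ with $s_1 s_2 \nleq p$ because $p$ is prime. Note that $1 = 1 \cdot x^{0} \in S$ (since $1 \nleq p$), that $x = 1 \cdot x^{1} \in S$, and that $\{s \in L_* : s \nleq p\} \subseteq S$. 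Now suppose, for contradiction, that $0 \notin S$, that is, $t \nleq 0$ for every $t \in S$. Theorem~\ref{1.3.} applied with $a = 0$ then provides a prime element $q$ with $t \nleq q$ for every $t \in S$. From $\{s \in L_* : s \nleq p\} \subseteq S$ it follows that every compact element below $q$ lies below $p$, hence $q \le p$ by compact generation; but $x \in S$ forces $x \nleq q$ while $x \le p$, so $q$ is a prime element strictly below $p$, contradicting minimality. Therefore $0 \in S$: there are a compact $s \nleq p$ and an integer $n \ge 0$ with $s \cdot x^{n} = 0$. The case $n = 0$ is impossible, as it would force $s = 0$ contrary to $s \nleq p$; so $n \ge 1$, and $y := s$ is the required compact element.

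The load-bearing points are the verification that $S$ really is a multiplicatively closed subset of $L_*$ --- this is exactly where the hypotheses that $L_*$ is multiplicatively closed, that $L$ is $1$-compact, and that $p$ is prime are consumed --- and the correct application of Theorem~\ref{1.3.} with $a = 0$, so that the contradiction hypothesis $0 \notin S$ becomes precisely its assumption ``$t \nleq a$ for all $t \in S$''. The one genuinely lattice-specific step, and the only one needing care beyond a direct translation of the ring argument, is passing from ``every compact element below $q$ lies below $p$'' to ``$q \le p$'', which rests on $L$ being compactly generated.
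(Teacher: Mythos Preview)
The paper does not give a proof of this lemma at all; it merely cites it as \cite[Lemma 3.5]{AAJ}, so there is nothing to compare against. Your argument is correct and is precisely the classical minimal-prime characterization transported to multiplicative lattices: the ``if'' direction is a direct primeness-and-compact-generation computation, and the ``only if'' direction builds the multiplicatively closed set $S=\{s\cdot x^{n}: s\in L_*,\ s\nleq p,\ n\ge 0\}$ and invokes Theorem~\ref{1.3.} with $a=0$ to produce a prime $q\le p$ with $x\nleq q$, contradicting minimality. The places you flag as load-bearing are indeed the right ones, and the handling of the edge case $n=0$ is clean.
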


\begin{thm}\label{2.6.} Let $L$ be a compactly generated, reduced multiplicative lattice having more than two minimal prime elements, then  $\Gamma^m(L)$ contains a cycle and $gr(\Gamma^{m}(L)) = 3$.\end{thm}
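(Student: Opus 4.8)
The plan is to produce an explicit triangle in $\Gamma^m(L)$. Since $L$ has more than two minimal prime elements, pick three distinct minimal primes $p_1,p_2,p_3$. By Theorem \ref{2.7.} (equivalence of (1) and (5)), since $L$ does \emph{not} have exactly two minimal primes — indeed if $p_1\wedge p_2=0$ for some pair of minimal primes then $\Gamma^m(L)$ would be complete bipartite, forcing exactly two minimal prime elements — we know that $p_i\wedge p_j\neq 0$ for every pair $i\neq j$. The idea is then to extract, for each pair, a nonzero element sitting below that meet, and arrange three such elements into a triangle by choosing them compactly and using the minimality characterization in Lemma \ref{mini}.

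Here is the intended construction. First I would fix a nonzero compact element $c_3\leq p_1\wedge p_2$ (possible since $L$ is compactly generated and $p_1\wedge p_2\neq 0$). Since $c_3\leq p_1\wedge p_2$ but $L$ has a minimal prime $p_3$ distinct from $p_1,p_2$, I would like a witness that $c_3$ is a zero divisor; more usefully, I would work with the minimal primes themselves. Using Lemma \ref{mini}: $p_3$ is minimal, so for the compact element $c_3\wedge$ (something below $p_3$)\dots — more cleanly, the key point is that in a reduced lattice with more than two minimal primes, for a compact $c\leq p_i$ there is a compact $y\nleq p_i$ with $c\cdot y=0$, and one can steer $y$ to lie below the remaining minimal primes. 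Concretely: choose compact $0\neq a\leq p_2\wedge p_3$, $0\neq b\leq p_1\wedge p_3$, $0\neq c\leq p_1\wedge p_2$. Then $a\cdot b\leq p_3\wedge p_3=p_3$ and also $a\cdot b\leq p_2\wedge p_1$, so $a\cdot b\leq p_1\wedge p_2\wedge p_3$; similarly $a\cdot c$ and $b\cdot c$ lie in the same triple meet. The genuinely new input needed is that $p_1\wedge p_2\wedge p_3=0$. This is where I expect the main obstacle: it is \emph{not} automatic — it is the lattice analogue of "a reduced ring's nilradical is the intersection of its minimal primes," and pairwise I only know the meets are nonzero.

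So the crux step is to show $p_1\wedge p_2\wedge p_3=0$, equivalently (by Corollary \ref{1.4.}, every prime contains a minimal prime, and reducedness) that $\bigwedge\{q : q \text{ minimal prime}\}=Nil(L)=0$. I would prove: if $0\neq x\leq q$ for all minimal primes $q$, then by Corollary \ref{1.4.} there is a prime $p$ with $x\nleq p$, and $p$ contains some minimal prime $q_0$, whence $x\leq q_0\leq p$, a contradiction; thus the meet of \emph{all} minimal primes is $0$. But I need the meet of just three of them to be $0$, which is stronger. The fix is to not take arbitrary minimal primes but to build them: take any nonzero compact $c\leq p_1\wedge p_2$. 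If $c\cdot y\neq 0$ for every compact $y\nleq$ every minimal prime\dots — instead, since $c\neq 0$, by Corollary \ref{1.4.} there is a prime $p$ with $c\nleq p$, hence a minimal prime $p_3'\leq p$ with $c\nleq p_3'$, so $p_3'\notin\{$ any prime above $c\}$; in particular $p_3'$ is a \emph{third} minimal prime and $c\nleq p_3'$. Then $c\wedge p_3'\leq p_1\wedge p_2\wedge p_3'$, but this still need not be $0$. The real resolution: use Lemma \ref{mini} to get, for the compact $c\leq p_1$, a compact $d\nleq p_1$ with $c\cdot d=0$; similarly handle $c\leq p_2$. One shows $d$ can be chosen below all minimal primes not above $c$. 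Then $c$ (or a compact piece of it) and $d$ and a third well-chosen compact element form the triangle: $c\cdot d=0$, and the third vertex is a nonzero element below $c\wedge d^{*}$ or a product, chosen so all three pairwise products vanish. Once the triangle exists, Theorem \ref{2.9.} gives $gr(\Gamma^m(L))\leq 4$, and the triangle forces $gr(\Gamma^m(L))=3$, completing the proof.

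In summary: the skeleton is (i) three distinct minimal primes exist; (ii) by Theorem \ref{2.7.} pairwise meets of minimal primes are nonzero (else exactly two minimal primes); (iii) the hard part — using Corollary \ref{1.4.}, Lemma \ref{mini}, compact generation, and reducedness — produce three nonzero (compact) elements $x,y,z$ with $x\cdot y=y\cdot z=z\cdot x=0$ and $x,y,z$ pairwise distinct, i.e. a triangle; (iv) conclude $gr(\Gamma^m(L))=3$ via Theorem \ref{2.9.}. The main obstacle is step (iii): translating "more than two minimal primes" into the simultaneous vanishing of three products, since unlike the ring case one cannot simply say the nilradical is the intersection of minimal primes and must instead do the extraction compactly, leaning on Lemma \ref{mini} to manufacture the annihilating partners and on Corollary \ref{1.4.} to keep the three elements genuinely distinct and nonzero.
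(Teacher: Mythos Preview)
Your proposal correctly isolates step (iii) as the crux and correctly names the tools (Lemma \ref{mini}, Corollary \ref{1.4.}, compact generation, reducedness), but you do not actually carry out the construction; you explicitly leave it as ``the main obstacle.'' The attempt to reduce to $p_1\wedge p_2\wedge p_3=0$ is, as you half-realized, a dead end: with more than three minimal primes this meet need not vanish, and in any case the paper never uses it. Your alternative sketch (pick $a\le p_2\wedge p_3$, $b\le p_1\wedge p_3$, $c\le p_1\wedge p_2$) only yields $a\cdot b,\,b\cdot c,\,a\cdot c\le p_1\wedge p_2\wedge p_3$, which is not $0$ in general, so that route does not close either.

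The paper's construction is more targeted than any of your sketches. It first picks a compact $z_1\le p_1$ with $z_1\nleq p_2$ and $z_1\nleq p_3$ (possible since $p_1\nleq p_2,p_3$ and $L$ is compactly generated: take the join of two suitable compact witnesses). By Lemma \ref{mini} there is a compact $y_1\nleq p_1$ with $z_1\cdot y_1=0$. Next one observes $z_1\cdot p_2\neq 0$ (otherwise $z_1\cdot p_2\le p_3$ forces $z_1\le p_3$ or $p_2\le p_3$), chooses a nonzero compact $t\le z_1\cdot p_2\le p_2$, and applies Lemma \ref{mini} again to get $y_2\nleq p_2$ with $t\cdot y_2=0$. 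Now comes a case split: if $y_1\cdot y_2=0$, then $\{t,y_1,y_2\}$ is already a triangle (since $t\cdot y_1\le z_1\cdot y_1=0$). If $y_1\cdot y_2\neq 0$, note $z_1\cdot y_1=0\le p_2$ with $z_1\nleq p_2$ forces $y_1\le p_2$, hence $y_1\cdot y_2\le p_2$; a third application of Lemma \ref{mini} yields $z_2\nleq p_2$ with $y_1\cdot y_2\cdot z_2=0$, and one checks that $a=t$, $b=y_1\cdot y_2$, $c=y_2\cdot z_2$ satisfy $a\cdot b=b\cdot c=a\cdot c=0$. The maneuver you were missing is precisely this \emph{iterated} use of Lemma \ref{mini} together with a case split on $y_1\cdot y_2$, rather than trying to force a triple meet of primes to vanish.

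Two minor remarks. First, once a triangle exists, $gr(\Gamma^m(L))=3$ is immediate; invoking Theorem \ref{2.9.} is unnecessary. (The paper, somewhat redundantly, also gives an independent argument via K\"onig's theorem and Theorem \ref{2.5.} that $gr=4$ would force exactly two minimal primes.) Second, your observation that $p_i\wedge p_j\neq 0$ for $i\neq j$ does not need Theorem \ref{2.7.}: if $p_1\wedge p_2=0$ then $p_1\cdot p_2=0\le p_3$, so $p_1\le p_3$ or $p_2\le p_3$, contradicting minimality of $p_3$.
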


\begin{proof} Let $p_1, p_2$ and $p_3$ be three distinct minimal prime elements of $L$. Since $L$ is compactly generated and $p_1\not\leq p_2, p_3$, there exist $x_1, x_1'\in L_*$ such that $x_1, x_1' \leq p_1$, $x_1' \not\leq p_2$ and $x_1\not\leq p_3$. Hence there is $x_1\vee x_1'=z_1\in L_*$ such that $z_1 \leq p_1$ with $z_1 \not\leq p_2, ~p_3$.  We claim that $z_1 \cdot p_2 \not=0$, otherwise, $z_1 \cdot p_2 \leq p_3$, a contradiction. Since $z_1 \leq p_1$, by Lemma \ref{mini}, there exists $y_1\in L_*$ such that $y_1\not\leq p_1$ with $z_1 \cdot y_1=0$. Now, choose $(\not=0) t\in L_*$ such that $t \leq  z_1 \cdot p_2 \leq p_2$.  By Lemma \ref{mini}, there exists $y_2\in L_*$ such that $y_2 \not\leq p_2$ with $t \cdot  y_2=0$.

If $y_1 \cdot y_2=0$. Then $a,b,c$ forms a cycle, where $a=t$, $b=y_1$ and $c=y_2$.

We assume that   $y_1 \cdot y_2\not=0$.  Since $p_2$ is prime and $z_1 \cdot y_1 =0 $, we get $y_1\cdot y_2 \leq p_2$.  Since $L_*$ is multiplicatively closed, by Lemma \ref{mini}, there exists $z_2 \not\leq p_2$ such that $ y_1\cdot y_2 \cdot z_2 =0$.
Put $a=t$, $b=y_1\cdot y_2$ and  $c=y_2 \cdot z_2$. Clearly, $a,b,c \in V(\Gamma^m(L))$ with $a \cdot b=b\cdot c= a\cdot c=0$. Thus $\Gamma^m(L)$ contains a cycle.

Now, we prove that $gr(\Gamma^m(L))=3$. On the contrary, suppose $gr(\Gamma^{m}(L)) = 4$.  Then $gr(\Gamma^{m}(L)) = gr(\Gamma(L)) =
4$, by Lemma \ref{equi}. As $\Gamma(L)$ contains a cycle,
$\Gamma(L)$ is not a star graph. We show that $\Gamma(L)$ has no
odd cycle. Then, in the view of the well known result of K\"onig, $\Gamma(L)$ is bipartite. On the contrary, assume that $\Gamma(L)$ has an odd cycle and let $x_{1}
\rightarrow x_{2} \rightarrow x_{3} \rightarrow \ldots x_{n}
\rightarrow x_{1}$ be an odd cycle of minimal length $n$ in
$\Gamma(L)$. Clearly $n \geq 5$, since $gr(\Gamma(L)) \neq 3$.
Now, the minimality of $n$ ensures that $x_{2} \cdot x_{4} \neq
0$. Let $(0 \neq) z  \leq x_{2} \cdot x_{4} $. Then we have $x_{1}
\cdot z \leq x_{1} \cdot x_{2} =0$ and $x_{5} \cdot z \leq x_{5}
\cdot x_{4} =0$. It follows that $x_{1} \rightarrow z \rightarrow
x_{5} \rightarrow \ldots x_{n} \rightarrow x_{1}$ is an odd cycle
of length $n-2$ in $\Gamma(L)$. This contradicts the minimality of
$n$. Hence, $\Gamma(L)$ has no odd cycle. Therefore $\Gamma(L)$ is bipartite.
By Theorem \ref{2.4.}, $\Gamma(L)$ is a complete bipartite graph. Hence
$\Gamma^{m}(L)$ is complete bipartite. By Theorem \ref{2.5.},
there exist minimal prime elements $p_{1}$ and $p_{2}$ in $L$ such
that $p_{1} \wedge p_{2} = 0$.  We claim that there are the only two minimal prime elements in $L$.
Suppose, there exists a third
minimal prime element, say $p_{3} \not\in \{ p_{1}, p_{2}\} $ in $L$.
As $p_1\cdot p_2 =p_{1} \wedge p_{2} = 0 \leq p_{3}$, gives $p_{1} \leq p_{3}$
or $ p_{2}\leq p_{3}$, a contradiction to the minimality of $p_{3}$.
Thus $L$ has exactly two minimal prime elements, a
contradiction to the assumption that $L$ has more than two minimal
prime elements. Hence $gr(\Gamma^{m}(L)) = 3$.\end{proof}

Now, Remark \ref{b1} and Theorem \ref{2.6.}  settles Conjecture $1.11$ of Behboodi and Rakeei \cite{br2}.

\begin{cor}[{Behboodi and Rakeei \cite[Conjecture 1.11]{br2}}]\label{2.6a.} Let $R$  be a reduced ring with
more than two minimal primes. Then $gr(\mathbb{AG}(R))= 3$.\end{cor}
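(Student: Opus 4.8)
The plan is to deduce the corollary from Theorem \ref{2.6.} by passing to the ideal lattice of $R$. Set $L = Id(R)$, the lattice of all ideals of $R$ ordered by inclusion, equipped with multiplication given by the usual product of ideals. As recorded in the introduction and in Remark \ref{b1}, $L$ is a multiplicative lattice which is $1$-compact and compactly generated, and the annihilating-ideal graph $\mathbb{AG}(R)$ is, essentially by definition, exactly $\Gamma^m(L)$: the vertices of $\mathbb{AG}(R)$ are the nonzero ideals $I$ with $IJ = (0)$ for some nonzero ideal $J$, that is, the nonzero zero divisors of $L$, and two such vertices are adjacent precisely when their product is the zero ideal, that is, when $I \cdot J \leq 0$ in $L$.

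Next I would transfer the hypotheses along this dictionary. Since $R$ is reduced, Remark \ref{b1} gives that $L = Id(R)$ is a reduced multiplicative lattice and, moreover, that $P$ is a minimal prime ideal of $R$ if and only if $P$ is a minimal prime element of $L$. Hence the assumption that $R$ has more than two minimal primes translates into the statement that $L$ is a compactly generated, reduced multiplicative lattice having more than two minimal prime elements.

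At this point Theorem \ref{2.6.} applies verbatim to $L$, yielding that $\Gamma^m(L)$ contains a cycle and $gr(\Gamma^m(L)) = 3$. Since $\mathbb{AG}(R) = \Gamma^m(L)$, we conclude $gr(\mathbb{AG}(R)) = 3$, as desired. The cycle-containment part also reproves that $\mathbb{AG}(R)$ has a triangle.

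The only genuine content to verify is the ring-to-lattice dictionary, and none of it is difficult: one checks that the product of ideals satisfies axioms (1)--(5) of a multiplicative lattice (commutativity, associativity, distributivity over arbitrary sums of ideals, $IJ \subseteq I \cap J$, and $I \cdot R = I$), that $R = (1)$ is a compact element since it is finitely generated, that $L$ is compactly generated because every ideal is the sum of the finitely generated (hence compact) ideals it contains, and finally the reducedness and minimal-prime correspondence, which is precisely Remark \ref{b1}. All of these are standard, so the real work has already been done inside Theorem \ref{2.6.}; the corollary is merely its specialization to $L = Id(R)$.
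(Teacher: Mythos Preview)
Your proposal is correct and follows essentially the same approach as the paper: the paper simply notes that the corollary follows from Remark \ref{b1} (the ring-to-lattice dictionary) together with Theorem \ref{2.6.}, which is precisely what you do, with a bit more detail spelled out on why the dictionary holds.
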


\begin{lemma} \label{2.11.} Let $L$ be a multiplicative lattice. If $Z(L)$ is an ideal, then $diam(\Gamma^m(L)) \leq 2$.\end{lemma}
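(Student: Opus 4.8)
The plan is to argue directly from the definition of diameter, using the hypothesis that $Z(L)$ is an ideal to rule out the distance-$3$ case left open by Theorem \ref{2.9.}. Take two distinct vertices $x, y \in V(\Gamma^m(L)) = (Z(L))^*$. By Theorem \ref{2.9.} we already know $\Gamma^m(L)$ is connected with $d(x,y) \leq 3$, so it suffices to show $d(x,y) \leq 2$; equivalently, whenever $x \cdot y \neq 0$ I must produce a common neighbour $w$ of $x$ and $y$, i.e. an element $w \notin \{0\}$ (in fact $w$ automatically lands in $(Z(L))^*$) with $x \cdot w \leq 0$ and $y \cdot w \leq 0$, that is $x\cdot w = y \cdot w = 0$, and $w \neq x, y$.

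First I would fix witnesses: since $x, y \in Z(L)$, choose $z \neq 0$ with $x \cdot z = 0$ and $w \neq 0$ with $y \cdot w = 0$. If $x\cdot y = 0$ there is nothing to prove, so assume $x \cdot y \neq 0$. The key step is to consider $z \vee w$. Because $Z(L)$ is an ideal and $z, w \in Z(L)$, we get $z \vee w \in Z(L)$, and $z \vee w \neq 0$ since $z \neq 0$. Now using distributivity of multiplication over joins (axiom (3) of a multiplicative lattice), $x \cdot (z \vee w) = (x\cdot z) \vee (x \cdot w) = x \cdot w$ and similarly $y \cdot (z \vee w) = y \cdot z$. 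Hmm — this does not immediately vanish, so the naive choice $w' = z\vee w$ is not yet a common neighbour; I need to be more careful.

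The cleaner route: proceed exactly as in the proof of Theorem \ref{2.9.}. With $z, w$ as above and $x \cdot y \neq 0$, if $z \cdot w = 0$ then $x - z - w - y$ is a path of length $3$, and here is where the ideal hypothesis enters. Consider the element $z \vee w$. Wait — instead, I would observe: since $z, w \in Z(L)$ and $Z(L)$ is an ideal, and since $x \notin Z(L)$ would be absurd ($x$ is a vertex), actually the decisive observation is that $z \cdot w$, whether or not it is $0$, together with the elements $z$ and $w$ controls the situation. If $z \cdot w \neq 0$, then $z \cdot w$ is a common neighbour of $x$ and $y$ (as $x \cdot (z\cdot w) = (x \cdot z)\cdot w = 0$ and likewise for $y$), provided $z \cdot w \notin \{x, y\}$, giving $d(x,y) = 2$. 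If $z \cdot w = 0$, then $z$ and $w$ are adjacent, so $z, w \in Z(L)$, hence $z \vee w \in Z(L) \setminus \{0\}$; but then $(z\vee w)\cdot$ something is $0$, and more usefully $x \cdot z = 0$ with $z \leq z \vee w$ means $z \vee w$ is a candidate — except $x\cdot(z\vee w) = x\cdot w$ need not vanish. The genuine fix is to note that when $z\cdot w = 0$, the path $x-z-w-y$ can be shortcut because $Z(L)$ being an ideal forces $z\vee w$ to be a zero divisor whose annihilator, by $0$-distributivity-type reasoning, contains both $x$ and $y$: indeed pick $u \neq 0$ with $(z\vee w)\cdot u = 0$; then $z \cdot u = w\cdot u = 0$, and I can replace the pair by $u$ — but I still need $x \cdot u = 0$. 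I expect this to be the main obstacle, and the resolution is to handle the degenerate coincidences ($z\cdot w \in \{x,y\}$, or $u$ colliding with $x,y$) by small perturbations, replacing a bad common-neighbour candidate $v$ by $v \vee z$ or $v \vee w$, which stays in $Z(L)$ by the ideal hypothesis and stays a common neighbour by distributivity of product over join; these replacements are routine once the ideal property of $Z(L)$ is in hand, and I would present them compactly rather than enumerate every case.
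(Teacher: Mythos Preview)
Your proposal has a genuine gap: in the case $z \cdot w = 0$ you never produce a common neighbour of $x$ and $y$. Picking $u \neq 0$ with $(z\vee w)\cdot u = 0$ gives $z\cdot u = w\cdot u = 0$, but there is no reason at all that $x\cdot u = 0$ or $y\cdot u = 0$; you acknowledge this obstacle and then wave it away with ``small perturbations'' $v \vee z$, $v \vee w$, but those perturbations do not help either (e.g.\ $x\cdot(u\vee z) = x\cdot u$, which is still unknown). The case analysis on $z\cdot w$ is simply the proof of Theorem~\ref{2.9.} again and does not use the ideal hypothesis in an essential way.

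The missing idea is to apply the ideal hypothesis to $x$ and $y$ themselves, not to their witnesses. Since $x,y \in Z(L)$ and $Z(L)$ is an ideal, $x\vee y \in Z(L)$, so $(0:(x\vee y)) \neq 0$. Set $c = (0:(x\vee y))$. Then $(x\vee y)\cdot c = 0$, hence by distributivity $x\cdot c = 0 = y\cdot c$, and $c$ is the desired common neighbour (note $c \notin \{x,y\}$, else $x\cdot y = 0$). This is the paper's argument, and it is a one-line proof once you look at $x\vee y$ rather than $z\vee w$.
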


\begin{proof} Let $x, y \in V(\Gamma^m(L))$. If $x \cdot y = 0$, then $d(x,y)=1$. Suppose $d(x,y)\neq 1$.
Since $Z(L)$ is an ideal, $x \vee y \in V(\Gamma^{m}(L))$ and
hence $(0: (x \vee y)) \neq 0$. We have  $(x \vee y) \cdot (0: (x
\vee y)) = 0$. As $x \cdot (0: (x \vee y))
 =0$ and $y \cdot (0: (x \vee y)) =0$, $d(x, y) = 2$. Hence,
$diam( \Gamma^m(L)) \leq 2$. \end{proof}

\begin{lem}\label{2.18.} Let $L$ be a non-reduced multiplicative lattice. If $a, b \in V(\Gamma^{m}(L))$
and $q \in Nil(L)$, then $a \vee (b \cdot q) \in
V(\Gamma^{m}(L))$.\end{lem}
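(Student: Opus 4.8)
The plan is to produce an explicit nonzero element annihilating $a\vee(b\cdot q)$, assembled from an annihilator of $a$ together with a suitable power of $q$. Since $a\in V(\Gamma^m(L))$, fix $a'\in L$ with $a'\neq 0$ and $a\cdot a'=0$. Since $q\in Nil(L)$, we have $q^{n}=0$ for some $n\in\mathbb{Z}^{+}$, hence $q^{n}\cdot a'=0$, so the set of positive integers $k$ with $q^{k}\cdot a'=0$ is non-empty; let $m$ be its least element. The candidate annihilator is $w:=q^{m-1}\cdot a'$ (read as $w:=a'$ when $m=1$); I would in fact split into the two cases $m=1$ and $m\geq 2$ to avoid any convention about $q^{0}$.

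The verification then runs as follows. In the case $m\geq 2$, minimality of $m$ gives $q^{m-1}\cdot a'\neq 0$, i.e. $w\neq 0$ (in the case $m=1$, $w=a'\neq 0$ outright). For the first join-summand: $w=q^{m-1}\cdot a'\leq q^{m-1}\wedge a'\leq a'$ by axiom (4), so monotonicity of the product (which follows from distributivity of $\cdot$ over $\vee$) yields $a\cdot w\leq a\cdot a'=0$, hence $a\cdot w=0$. For the second join-summand: by associativity and commutativity, $(b\cdot q)\cdot w=b\cdot(q^{m}\cdot a')=b\cdot 0=0$. Distributing over $\vee$ once more gives $\bigl(a\vee(b\cdot q)\bigr)\cdot w=(a\cdot w)\vee\bigl((b\cdot q)\cdot w\bigr)=0$. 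Finally $a\vee(b\cdot q)\geq a\neq 0$, so $a\vee(b\cdot q)\nleq 0$; together with $w\neq 0$ this is exactly the statement that $a\vee(b\cdot q)$ is a vertex of $\Gamma^{m}(L)$.

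I do not expect a genuine obstacle here; the only delicate point is ensuring the candidate annihilator is nonzero, which is precisely why one passes to the \emph{least} $m$ with $q^{m}\cdot a'=0$ rather than naively taking $q^{n-1}\cdot a'$, which could vanish. Note that the hypothesis that $L$ is non-reduced plays no role in the argument beyond ensuring $Nil(L)$ may contain something other than $0$ (and the role of $b$ being a vertex is not needed at all, only $b\in L$); when $q=0$ the statement is trivial since $a\vee(b\cdot q)=a\in V(\Gamma^{m}(L))$. The whole proof uses only the multiplicative-lattice axioms, so no earlier lemma from the paper is required.
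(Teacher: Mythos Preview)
Your proof is correct and follows essentially the same approach as the paper: pick a nonzero annihilator $c$ of $a$, let $m$ be least with $c\cdot q^{m}=0$, and use $c\cdot q^{m-1}$ as the nonzero annihilator of $a\vee(b\cdot q)$. Your treatment is slightly more careful than the paper's about the edge cases $q=0$ and $m=1$, and your remark that neither the hypothesis $b\in V(\Gamma^{m}(L))$ nor non-reducedness is genuinely needed is accurate.
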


\begin{proof} Let $q$ be a non-zero nilpotent element. Since $a \in
V(\Gamma^{m}(L))$, there exist $c \neq 0$ such that $c \cdot a =
0$. Since $q$ is nilpotent, there is a positive integer $m$ such
that $c \cdot q^{m} = 0$ and $c \cdot q^{m-1} \neq 0$. Consider
the pair $a \vee (b \cdot q) \neq 0$ and $c \cdot q^{m-1} \neq 0$,
then $(a \vee (b \cdot q)) \cdot (c \cdot q^{m-1}) = 0$. Therefore
$a \vee (b \cdot q) \in V(\Gamma^{m}(L))$.\end{proof}

\begin{thm}\label{2.20.} Let $L$ be non-reduced multiplicative lattice and $Z(L)$ is not an ideal, then $diam(\Gamma^{m}(L)) = 3$.\end{thm}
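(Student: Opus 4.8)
My plan is to combine Theorem \ref{2.9.}, which already gives $diam(\Gamma^m(L)) \le 3$, with the construction of an explicit pair of vertices at distance exactly $3$. First I would note that $Z(L)$ is always a semi-ideal: if $y \le x$ and $x \cdot w = 0$ with $w \ne 0$, then $y \cdot w \le x \cdot w = 0$. Hence ``$Z(L)$ is not an ideal'' means precisely that $Z(L)$ is not closed under $\vee$, so there exist $a, b \in Z(L)$ with $a \vee b \notin Z(L)$; necessarily $a, b \ne 0$, so $a, b$ are distinct vertices. Next I would isolate the following criterion, to be used several times: \emph{if $u, v$ are distinct vertices with $u \cdot v \ne 0$ and $u \vee v \notin Z(L)$, then $d(u,v) = 3$}. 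Indeed $d(u,v) \le 3$ by Theorem \ref{2.9.}, $d(u,v) \ne 1$ since $u \cdot v \ne 0$, and $d(u,v) \ne 2$ because a common neighbour $z$ of $u$ and $v$ would be a nonzero element with $z \cdot u = z \cdot v = 0$, so $z \cdot (u \vee v) = 0$, contradicting $u \vee v \notin Z(L)$.

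If the chosen witnesses satisfy $a \cdot b \ne 0$, the criterion applied to $(a,b)$ finishes the proof immediately, so the substantive case is $a \cdot b = 0$; this is where non-reducedness is needed. I would take a nonzero nilpotent element and, replacing it by an appropriate power, assume it is some $q \ne 0$ with $q^2 = 0$, so that $q$ is itself a vertex. Since $a \vee b$ is not a zero divisor, $a \cdot q$ and $b \cdot q$ cannot both vanish, so after possibly interchanging $a$ and $b$ I may assume $a \cdot q \ne 0$; short arguments then give $b \ne q$ and $b \vee q \ne a$ (for example $b \vee q = a$ would force $a \vee b = a \in Z(L)$). Now split according to whether $b \vee q$ is a vertex. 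If it is, then $(b \vee q) \cdot a = b\cdot a \vee q\cdot a = q \cdot a \ne 0$ and $(b \vee q) \vee a = a \vee b \vee q \notin Z(L)$ (the complement of the semi-ideal $Z(L)$ is an up-set), so the criterion yields $d(b \vee q, a) = 3$. If $b \vee q$ is not a vertex then, being nonzero, it is not a zero divisor; this forces $b \cdot q \ne 0$, since otherwise $q \cdot (b \vee q) = q\cdot b \vee q^2 = 0$ with $q \ne 0$ would be a contradiction, and then the criterion applied to $(b, q)$ gives $d(b, q) = 3$. In all cases $diam(\Gamma^m(L)) = 3$.

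The step I expect to be the main obstacle is precisely this final dichotomy: once $a \cdot b = 0$ the original witnesses produce only an edge, and the only extra structure available is a nilpotent element. The trick is to normalise the nilpotent to have square zero and then pivot on whether $b \vee q$ is still a zero divisor, the square-zero condition being exactly what rules out the degenerate branch $b \cdot q = 0$. Alongside this one must check the small non-degeneracy facts (that $a$, $b$, $q$, $b \vee q$ are distinct wherever the criterion is invoked); these are routine but cannot be skipped, since the distance criterion requires genuinely distinct vertices.
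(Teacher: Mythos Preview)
Your proof is correct. The overall skeleton matches the paper's --- pick $a,b\in Z(L)$ with $a\vee b\notin Z(L)$, dispose of the non-adjacent case via the ``criterion'', and in the adjacent case $a\cdot b=0$ manufacture a new pair at distance $3$ using a nilpotent --- but the construction in the adjacent case is genuinely different. The paper keeps a general nilpotent $q$, first argues that some nilpotent satisfies $b^{2}\cdot q\neq 0$ (or $a^{2}\cdot q\neq 0$), invokes Lemma~\ref{2.18.} to guarantee that $a\vee(b\cdot q)$ is a vertex, and then applies the criterion to the pair $\bigl(a\vee(b\cdot q),\,b\bigr)$. You instead normalise to $q^{2}=0$, which lets you work with the simpler element $b\vee q$ and avoid Lemma~\ref{2.18.} entirely; the price is the extra dichotomy on whether $b\vee q$ is a zero divisor, but both branches are short and the square-zero normalisation is exactly what makes the second branch go through. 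Your route is slightly more elementary and self-contained; the paper's route produces a single explicit pair without a case split but leans on an auxiliary lemma.
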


\begin{proof} Since $Z(L)$ is not an ideal, there exist $a, b \in
 V(\Gamma^m(L))$ such that $0=(0:(a \vee b))$.

$Case(I):$ If $a$ and $b$ are non-adjacent and if $d(a, b) = 2$, then there is an
element $c \neq 0$ such that $a \cdot c, b \cdot c = 0$. This
gives $c \cdot (a \vee b))= 0$, a contradiction to the fact that $0=(0:(a \vee
b))$. Therefore $d(a, b) = 3$.

$Case(II):$ Now, suppose $a$ and $b$ are adjacent, that is, $a \cdot b = 0$.
Then $(a \vee b)^{2} = a^{2} \vee b^{2}$. We
claim that $(0:(a^{2} \vee b^{2}))=0$. If possible, $(0:(a^{2}
\vee b^{2})) \neq 0$, then there exist $c \neq 0$ such that $c
\cdot (a^{2} \vee b^{2})=0$, that is, $c \cdot (a \vee b)^{2}=0$. But then $0=(0:(a \vee b))$ gives $c\cdot (a \vee b)=0$ which further yields $c=0$, a contradiction.

Now, we claim that there is a non-zero nilpotent element $q$ such that either $a^2 \cdot q\not=0$ or $b^2 \cdot q \not=0$. Suppose on  the contrary that $a^2 \cdot q=0$ and $b^2 \cdot q =0$. This together with $a\cdot b \cdot q=0$ gives $(a\vee b) \cdot b\cdot q =0$. Since $0=(0:(a \vee b))$, we have $b \cdot q=0$. Similarly, we have $a\cdot q=0$. Hence $(a\vee b)\cdot q =0$. Thus $q=0$, a contradiction.

Hence without loss of generality, we may assume that there is a non-zero nilpotent
element $q$ such that $b^{2} \cdot q \neq 0$. Since $a, b \in
V(\Gamma^{m}(L))$ and $q \in Nil(L)$, by Lemma \ref{2.18.}, $a
\vee (b \cdot q) \in V(\Gamma^{m}(L))$. Consider the pair $a \vee
(b \cdot q)$ and $b$. Clearly, $(a \vee (b \cdot q)) \cdot b =
b^{2} \cdot q \neq 0$. Hence $d(a \vee (b \cdot q), ~b) \neq 1$.
Since $(0:(a \vee b)) = (0:(b \vee a \vee (b \cdot q)))=0$, applying technique as in Case$(I)$ for $a \vee (b\cdot q)$ and $b$ we get $d(a
\vee (b \cdot q), ~b) \neq 2$.  Hence by Theorem \ref{2.9.}, $d(a
\vee (b \cdot q), ~b) = 3$ and thus  $diam(\Gamma^{m}(L)) =
3$.\end{proof}

\begin{thm}\label{2.22.} Let $L$ be non-reduced multiplicative lattice. Then $Z(L)$ is not an ideal if and only if $diam(\Gamma^{m}(L)) = 3$\end{thm}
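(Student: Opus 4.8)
The plan is to obtain this equivalence simply by packaging together the two one-directional statements that have already been proved, namely Lemma \ref{2.11.} and Theorem \ref{2.20.}, and checking that their hypotheses line up with those of the present theorem. No new combinatorial or lattice-theoretic work should be needed.

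First I would treat the implication ``$Z(L)$ is not an ideal $\Rightarrow diam(\Gamma^{m}(L)) = 3$''. Here the standing hypothesis is that $L$ is a non-reduced multiplicative lattice, which is exactly the hypothesis of Theorem \ref{2.20.}; so that theorem applies verbatim and gives $diam(\Gamma^{m}(L)) = 3$. (Note that Theorem \ref{2.20.} also tacitly guarantees that $\Gamma^{m}(L)$ has at least two vertices in this situation, so the diameter is meaningful.) For the converse, ``$diam(\Gamma^{m}(L)) = 3 \Rightarrow Z(L)$ is not an ideal'', I would argue by contraposition: if $Z(L)$ were an ideal, then Lemma \ref{2.11.} (which needs only that $L$ be a multiplicative lattice) would force $diam(\Gamma^{m}(L)) \leq 2$, contradicting $diam(\Gamma^{m}(L)) = 3$. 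Hence $Z(L)$ is not an ideal.

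I do not anticipate any real obstacle, since all the substance resides in Lemma \ref{2.11.} and Theorem \ref{2.20.}. The only points worth a remark are bookkeeping ones: the non-reducedness assumption is used only in the forward direction (through Theorem \ref{2.20.}) and cannot be dropped from the statement, while the backward direction in fact holds for any multiplicative lattice; and one should record that the diameter is well defined because ``$Z(L)$ not an ideal'' already produces two distinct vertices $a,b$ of $\Gamma^{m}(L)$ with $(0:(a\vee b))=0$.
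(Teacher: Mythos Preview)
Your proposal is correct and matches the paper's own proof, which simply says the result ``follows from'' Lemma~\ref{2.11.} and Theorem~\ref{2.20.}. Your additional remarks about where non-reducedness is actually used and about the diameter being well defined are accurate but go slightly beyond what the paper records.
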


\begin{proof} Follows from the Theorem \ref{2.11.} and Theorem
\ref{2.20.}.\end{proof}

\begin{cor}[{Behboodi and Rakeei \cite[Theorem 1.6]{br2}}]\label{2.23.} Let $R$ be a non-reduced ring. There is a pair of
annihilating-ideals $I$, $J$ of $R$ such that $Ann(I+J)=(0)$ if
and only if $diam(\mathbb{AG}(R))=3$.\end{cor}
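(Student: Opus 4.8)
The plan is to derive this corollary purely from the lattice-theoretic results already established, by invoking the dictionary between a commutative ring $R$ and its ideal lattice $L = Id(R)$. First I would recall, as noted in Remark \ref{b1}, that if $R$ is a non-reduced commutative ring with unity, then $L = Id(R)$ is a non-reduced, $1$-compact, compactly generated multiplicative lattice, and that under this correspondence the annihilating-ideal graph $\mathbb{AG}(R)$ coincides with the zero-divisor graph $\Gamma^m(L)$ in the multiplicative sense: the vertices of $\mathbb{AG}(R)$ are exactly the non-zero ideals $I$ with non-zero annihilator, which are precisely the non-zero zero divisors of $L$, and $I$ is adjacent to $J$ in $\mathbb{AG}(R)$ iff $IJ = (0)$ iff $I \cdot J \leq 0$ in $L$. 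In particular $diam(\mathbb{AG}(R)) = diam(\Gamma^m(L))$.

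Next I would translate the hypothesis. The statement ``there is a pair of annihilating-ideals $I, J$ of $R$ with $Ann(I+J) = (0)$'' says that there exist vertices $I, J \in V(\Gamma^m(L))$ such that the annihilator of their join is zero, i.e. $(0 : (I \vee J)) = 0$. Recall that $Z(L)$ is an ideal of $L$ precisely when the set of zero divisors is closed under joins (and downward closed, which it automatically is for the zero-divisor set together with $0$); failure of $Z(L)$ to be an ideal amounts exactly to the existence of vertices $a, b$ with $a \vee b \notin Z(L)$, equivalently $(0 : (a \vee b)) = 0$. So the ring-side hypothesis is equivalent to the lattice-side statement ``$Z(L)$ is not an ideal.'' With this identification in hand, the corollary follows immediately from Theorem \ref{2.22.} applied to the non-reduced multiplicative lattice $L = Id(R)$: $Z(L)$ is not an ideal iff $diam(\Gamma^m(L)) = 3$, and both sides transport back to $R$ via the dictionary above.

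The only point requiring a little care — and the step I expect to be the main (minor) obstacle — is verifying cleanly that ``$Z(L)$ is not an ideal'' corresponds exactly to the existence of a pair of annihilating ideals $I, J$ with $Ann(I + J) = (0)$, rather than some a priori weaker or stronger condition; in particular one must check that the down-set condition in the definition of an ideal is never the obstruction, so that $Z(L)$ fails to be an ideal solely because of non-closure under joins. Since $Z(L) = (Z(L))^* \cup \{0\}$ is a semi-ideal automatically (if $x \cdot z \le 0$ and $w \le x$ then $w \cdot z \le 0$), the join condition is the whole story, and the equivalence is exactly the translation of $(0:(I \vee J)) = 0$ for some annihilating ideals $I, J$. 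Once this bookkeeping is done, I would simply write: ``Follows from Theorem \ref{2.22.} and Remark \ref{b1}.''
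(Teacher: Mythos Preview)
Your proposal is correct and follows exactly the approach the paper intends: the corollary is stated immediately after Theorem \ref{2.22.} with no separate proof, so it is meant to follow from that theorem via the ring--ideal-lattice dictionary of Remark \ref{b1}, which is precisely what you do. Your extra care in checking that $Z(L)$ is automatically a semi-ideal (so that failure to be an ideal is equivalent to the existence of $I,J\in V(\Gamma^m(L))$ with $(0:(I\vee J))=0$, i.e.\ $Ann(I+J)=(0)$) is a welcome clarification of a point the paper leaves implicit.
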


\begin{lem}\label{2.11a.} Let $L$ be a reduced lattice and
$diam(\Gamma^m(L))=1$. Then $Z(L)$ is not an ideal.\end{lem}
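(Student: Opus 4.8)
The plan is to argue by contradiction. Suppose $Z(L)$ is an ideal of $L$. Since $L$ is reduced, Lemma~\ref{equi} gives $\Gamma^m(L)=\Gamma(L)$, so adjacency of two distinct vertices $a,b$ means exactly $a\wedge b=0$; and since $diam(\Gamma^m(L))=1$, the graph is complete on at least two vertices, i.e. every two distinct vertices are adjacent and at least one vertex exists.

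First I would fix a vertex $a\in V(\Gamma^m(L))$. As the graph has at least two vertices and is complete, $a$ has a neighbour $b$, so $a\wedge b=0$ with $a\neq 0\neq b$. Next I would check that $c:=a\vee b$ is a genuinely new vertex: it is nonzero (otherwise $a=b=0$), it differs from $a$ (otherwise $b\leq a$, forcing $b=a\wedge b=0$), and likewise differs from $b$; moreover $c=a\vee b\in Z(L)$ because $Z(L)$ is assumed to be an ideal and $a,b\in Z(L)$, hence $c\in V(\Gamma^m(L))$.

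Finally, $a$ and $c$ are distinct vertices of a complete graph, so they must be adjacent, whence $a\wedge c=0$. But $a\leq a\vee b=c$, so $a\wedge c=a\neq 0$, a contradiction. Therefore $Z(L)$ is not an ideal.

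There is no real obstacle here; the only points to watch are that the hypothesis $diam(\Gamma^m(L))=1$ is used both to guarantee a vertex exists and to force $a$ and $a\vee b$ to be adjacent, and that reducedness is what lets one pass from $\cdot$-adjacency to $\wedge$-adjacency via Lemma~\ref{equi} (equivalently, one may compute $a\cdot(a\vee b)=a^2\vee(a\cdot b)=a^2$ directly and invoke the hypothesis that $L$ has no nonzero nilpotents).
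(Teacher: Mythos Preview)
Your argument is correct and follows essentially the same route as the paper: assume $Z(L)$ is an ideal, take two distinct nonzero zero-divisors, form their join, and use $diam(\Gamma^m(L))=1$ to force a square to vanish, contradicting reducedness. The paper phrases the final step as showing $x=x\vee y$ and $y=x\vee y$ (hence $x=y$), whereas you first argue $a\vee b\notin\{a,b\}$ and then get $a=a\wedge(a\vee b)=0$; these are the same contradiction read in opposite directions.
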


\begin{proof} Let $x \neq 0 $ and $y \neq 0$ be distinct elements
of $Z(L)$. Suppose $Z(L)$ is an ideal. Therefore $x \vee y \in
Z(L)$. If $x \neq (x \vee y)$, then by $diam(\Gamma^m(L))=1$ gives $x \cdot (x \vee y) = x^{2}
\vee (x \cdot y) = x^{2}=0$. This gives $x=0$, a contradiction.
Hence $x = x \vee y$. Similarly, $y = x \vee y$.  Hence $x=y$,
a contradiction. Thus $Z(L)$ is not an ideal.\end{proof}

\begin{lem} Let $L$ be reduced lattice and $Z(L)$ is an ideal, then $diam(\Gamma^m(L))=2$. \end{lem}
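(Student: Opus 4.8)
The plan is to read off the result by combining Lemma~\ref{2.11.} with the two lemmas immediately preceding the statement, under the (implicit, but necessary) assumption that $\Gamma^m(L)$ has at least one vertex, equivalently $Z(L)\neq\{0\}$, which is needed for the diameter to be defined at all. By Lemma~\ref{2.11.}, since $Z(L)$ is an ideal we already know $diam(\Gamma^m(L))\leq 2$. So the whole task is to exclude the two small values $diam(\Gamma^m(L))=0$ and $diam(\Gamma^m(L))=1$, and reducedness of $L$ is exactly what makes both exclusions work.

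First I would rule out $diam(\Gamma^m(L))=0$, i.e.\ that $\Gamma^m(L)$ consists of a single vertex. If $x\in V(\Gamma^m(L))$, then by definition there is some $y\neq 0$ with $x\cdot y=0$, and this $y$ is itself a vertex of $\Gamma^m(L)$. If $x=y$, then $x^2=0$, so $x=0$ because $L$ is reduced, contradicting that $x$ is a vertex. Hence $x\neq y$, so $\Gamma^m(L)$ has at least two vertices and $diam(\Gamma^m(L))\geq 1$. Next I would rule out $diam(\Gamma^m(L))=1$: this is precisely the contrapositive of Lemma~\ref{2.11a.}, which asserts that for a reduced lattice $diam(\Gamma^m(L))=1$ forces $Z(L)$ not to be an ideal; since here $Z(L)$ is assumed to be an ideal, $diam(\Gamma^m(L))\neq 1$. (If one prefers a self-contained argument: were $\Gamma^m(L)$ complete, then for distinct vertices $x,y$ the element $x\vee y$ lies in the ideal $Z(L)$ and is nonzero, hence is a vertex; if $x\vee y\neq x$ then adjacency gives $x\cdot(x\vee y)=x^2\vee(x\cdot y)=x^2=0$, so $x=0$, a contradiction, whence $y\leq x$, and symmetrically $x\leq y$, contradicting $x\neq y$.)

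Putting the three facts together — $diam(\Gamma^m(L))\leq 2$, $diam(\Gamma^m(L))\neq 0$, and $diam(\Gamma^m(L))\neq 1$ — yields $diam(\Gamma^m(L))=2$. I do not expect any genuine obstacle here: the only subtle points are remembering that a vertex cannot be "adjacent only to itself" in a reduced lattice (so the one-vertex case is impossible) and that ruling out diameter $1$ is not automatic but is handed to us by Lemma~\ref{2.11a.}; everything else is bookkeeping.
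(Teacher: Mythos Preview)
Your proof is correct and follows essentially the same approach as the paper: the paper's proof simply cites Lemma~\ref{2.11.} (giving $diam(\Gamma^m(L))\leq 2$) and Lemma~\ref{2.11a.} (whose contrapositive rules out diameter $1$). You are in fact more careful than the paper, since you explicitly exclude the diameter-$0$ case using reducedness and flag the implicit hypothesis $Z(L)\neq\{0\}$, both of which the paper leaves unstated.
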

\begin{proof} Proof follows from the Lemma \ref{2.11.} and Lemma \ref{2.11a.}.\end{proof}

The following theorem is due to  Joshi, Waphare and  Pourali
\cite{jwp}. We quote this result when $I=\{0\}$ is a semiprime ideal,  that is, $L$ is a $0$-distributive lattice.

\begin{thm}\label{2.13.} Let $L$ be a $0$-distributive lattice and $V(\Gamma(L))\cup  \{0\}$ is not an ideal. Then $diam(\Gamma(L)) \leq 2$ if and only if $L$ has exactly
two minimal prime ideals.\end{thm}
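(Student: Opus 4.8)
The plan is to isolate a combinatorial reformulation of ``$diam(\Gamma(L))\le 2$'' and then decide it by analysing the minimal prime ideals of $L$. Since $(L,\wedge)$ is a commutative semigroup with zero, the argument of Theorem \ref{2.9.} (with $\wedge$ replacing $\cdot$) shows $\Gamma(L)$ is connected with $diam(\Gamma(L))\le 3$. Using $0$-distributivity, for $x\in L$ the annihilator $x^{\perp}:=\{t\in L\mid t\wedge x=0\}$ is an ideal, $x^{\perp}\cap y^{\perp}=(x\vee y)^{\perp}$, and a nonzero $x$ is a vertex of $\Gamma(L)$ iff $x^{\perp}\neq\{0\}$. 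So two distinct vertices $x,y$ lie at distance $3$ exactly when $x\wedge y\neq 0$ (they are not adjacent) and $(x\vee y)^{\perp}=\{0\}$ (they have no common neighbour); hence, with $Z(L)=V(\Gamma(L))\cup\{0\}$,
\[
diam(\Gamma(L))\le 2\ \Longleftrightarrow\ \big(\forall\,x,y\in Z(L):\ x\wedge y\neq 0\ \Rightarrow\ x\vee y\in Z(L)\big).
\]
I would also record two standard facts: the separation theorem for $0$-distributive lattices ($\{0\}$ is semiprime) together with ``an intersection of a chain of primes is prime'' gives a minimal prime ideal below every prime, whence (using that $Z(L)$ is not an ideal) $Z(L)$ is the union of the minimal prime ideals of $L$ and their intersection is $\{0\}$; and the prime-avoidance lemma, that an ideal contained in a union of two ideals lies in one of them.

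\emph{($\Leftarrow$)} Suppose $L$ has exactly two minimal prime ideals $P_1,P_2$; then $Z(L)=P_1\cup P_2$ and $P_1\cap P_2=\{0\}$. Let $x,y$ be distinct vertices with $x\wedge y\neq 0$. If one lay in $P_1\setminus P_2$ and the other in $P_2\setminus P_1$, then $x\wedge y\le x\in P_1$ and $x\wedge y\le y\in P_2$ would force $x\wedge y\in P_1\cap P_2=\{0\}$; so $x$ and $y$ lie in a common $P_i$, hence $x\vee y\in P_i\subseteq Z(L)$, and the reformulation gives $d(x,y)\le 2$. Therefore $diam(\Gamma(L))\le 2$.

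\emph{($\Rightarrow$)} Assume $diam(\Gamma(L))\le 2$ and $Z(L)$ is not an ideal. One minimal prime would make $Z(L)=P_1$ an ideal, so there are at least two; and by the reformulation we may fix vertices $x_0,y_0$ with $x_0\wedge y_0=0$ and $x_0\vee y_0\notin Z(L)$, so $x_0^{\perp}\cap y_0^{\perp}=(x_0\vee y_0)^{\perp}=\{0\}$. The target is: $x_0^{\perp}$ and $y_0^{\perp}$ are prime ideals and $Z(L)=x_0^{\perp}\cup y_0^{\perp}$. Since $x_0\notin x_0^{\perp}$ and $x_0\wedge y_0=0$, any prime inside $x_0^{\perp}$ contains $x_0^{\perp}$, so $x_0^{\perp},y_0^{\perp}$ are then minimal primes, and every minimal prime, lying in $Z(L)=x_0^{\perp}\cup y_0^{\perp}$, equals one of them by prime-avoidance --- giving exactly two minimal primes. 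To reach the target I would run a common-neighbour argument off $diam(\Gamma(L))\le 2$: for a vertex $z$ with $z\wedge x_0\neq 0$ and $z\wedge y_0\neq 0$, a common neighbour of $z$ and $x_0$ must meet $y_0$ nontrivially (else it would lie in $(x_0\vee y_0)^{\perp}=\{0\}$), a common neighbour of that vertex and $y_0$ must meet $x_0$ nontrivially, and so on; replacing each element by its meet with $x_0$ or $y_0$ keeps the iteration inside $(x_0]$ or $(y_0]$, and a suitable extremal choice should manufacture a nonzero element orthogonal to both $x_0$ and $y_0$ --- a contradiction showing $Z(L)\subseteq x_0^{\perp}\cup y_0^{\perp}$; primeness of $x_0^{\perp},y_0^{\perp}$ is then extracted from the same identity.

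\emph{Main obstacle.} The hard part is precisely this last step --- ruling out a vertex that meets both $x_0$ and $y_0$ nontrivially, and then getting primeness of $x_0^{\perp},y_0^{\perp}$. The naive common-neighbour chase tends to reproduce the same configuration rather than terminate, so one likely needs either a genuine minimality/extremality choice or a detour through Theorem \ref{2.4.}: show that under the present hypotheses $\Gamma(L)$ contains no odd cycle (the girth-reduction used in the proof of Theorem \ref{2.6.} rules out odd cycles of length $\ge 5$, so only a triangle could occur, and one derives a contradiction with $Z(L)$ not an ideal), so $\Gamma(L)$ is complete bipartite, and its two sides together with $0$ are then seen to be the two minimal prime ideals. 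A further point of care: no compactness is assumed on $L$, so every appeal to a prime ideal must go through the $0$-distributive separation theorem, not through compact elements.
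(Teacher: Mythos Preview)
This theorem is not proved in the present paper; it is quoted from Joshi--Waphare--Pourali \cite{jwp} (see the sentence immediately preceding the statement). So there is no in-paper proof against which to compare your argument.

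On the argument itself: your reformulation of $diam(\Gamma(L))\le 2$ via $(x\vee y)^{\perp}$, the background facts about $0$-distributive lattices you list, and your proof of the implication $(\Leftarrow)$ are all correct.

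The implication $(\Rightarrow)$, however, is not proved --- as you yourself flag under ``Main obstacle''. Two concrete gaps remain. First, the inclusion $Z(L)\subseteq x_0^{\perp}\cup y_0^{\perp}$: your ``common-neighbour chase'' genuinely loops. Starting from $z$ with $z\wedge x_0\neq 0\neq z\wedge y_0$ one obtains $w\in x_0^{\perp}\cap z^{\perp}$ with $w\wedge y_0\neq 0$ and $v\in y_0^{\perp}\cap z^{\perp}$ with $v\wedge x_0\neq 0$; then $w\wedge v\in (x_0\vee y_0)^{\perp}=\{0\}$, so $z,w,v$ is a triangle, but $w\vee v$ is again a vertex meeting both $x_0$ and $y_0$, and nothing terminates. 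No extremal choice is specified that would break this cycle. Second, the detour through Theorem \ref{2.4.}: here everything reduces to ``$\Gamma(L)$ has no triangle'', but you do not prove this. Under $diam(\Gamma(L))\le 2$, a triangle $a,b,c$ only forces $(a\vee b\vee c)^{\perp}\neq\{0\}$ (from a common neighbour of $a\vee b$ and $a\vee c$), which merely enlarges the clique; it does not by itself contradict ``$Z(L)$ not an ideal''. So as written, $(\Rightarrow)$ is a plan, not a proof; you will need either the argument from \cite{jwp} or a genuinely new idea to close it.
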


\begin{thm}\label{2.14.} Let $L$ be a  1-compact, compactly generated multiplicative lattice  with $L_*$ as a multiplicatively closed set and $Z(L)$ is not an ideal. Then $diam(\Gamma^m(L)) = 2$ if and only if $L$ is reduce with exactly two
minimal prime elements.\end{thm}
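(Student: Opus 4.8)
\emph{Overview.} The plan is to prove the two implications by reducing, through Lemma \ref{equi}, to Theorem \ref{2.13.} for the ordinary zero-divisor graph $\Gamma(L)$, and to translate between minimal prime \emph{elements} and minimal prime \emph{ideals} by means of Theorem \ref{2.7.}. For the forward direction, suppose $diam(\Gamma^m(L)) = 2$. First I would rule out the non-reduced case: if $L$ were non-reduced, then since $Z(L)$ is not an ideal, Theorem \ref{2.20.} would force $diam(\Gamma^m(L)) = 3$, a contradiction; hence $L$ is reduced. Then Lemma \ref{equi} gives $\Gamma^m(L) = \Gamma(L)$, so $diam(\Gamma(L)) \leq 2$ and $V(\Gamma(L)) \cup \{0\} = Z(L)$ is not an ideal, while Lemma \ref{a} makes $L$ a $0$-distributive lattice; Theorem \ref{2.13.} then yields that $L$ has exactly two minimal prime ideals. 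For the converse I would run the same chain backwards: from $L$ reduced with exactly two minimal prime elements, produce exactly two minimal prime ideals, invoke Theorem \ref{2.13.} to get $diam(\Gamma(L)) \leq 2$, and hence $diam(\Gamma^m(L)) \leq 2$ by Lemma \ref{equi}; the value $2$ (rather than $1$) then comes from exhibiting two non-adjacent vertices.

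\emph{The bridge between elements and ideals.} The crux used in both directions is that, for a reduced $1$-compact compactly generated $L$ with $L_*$ multiplicatively closed, $L$ has exactly two minimal prime elements if and only if it has exactly two minimal prime ideals. I would first show that both ``radical-type'' infima are trivial, namely $\bigwedge\{\,\text{minimal prime elements of }L\,\} = 0$ and $\bigcap\{\,\text{minimal prime ideals of }L\,\} = \{0\}$. For the former, given $0 \neq a$, Corollary \ref{1.4.} supplies a prime element $p \not\geq a$, which in turn contains a minimal prime element $q$; since $q \leq p$ and $a \not\leq p$ we still have $a \not\leq q$. For the latter, additionally use that $(q]$ is a prime ideal (Lemma \ref{2.4a.}) with $a \notin (q]$ and that, $L$ having $0$, $(q]$ contains a minimal prime ideal, which therefore also omits $a$. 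Consequently, if $L$ has exactly two minimal prime ideals $P_1, P_2$, then $P_1 \cap P_2 = \{0\}$, so Theorem \ref{2.7.} (implication $(1)\Rightarrow(5)$) produces nonzero minimal prime elements $p_1, p_2$ with $p_1 \wedge p_2 = 0$; a third minimal prime element $p_3$ would satisfy $p_1 \cdot p_2 = p_1 \wedge p_2 = 0 \leq p_3$, forcing $p_1 \leq p_3$ or $p_2 \leq p_3$, impossible. The reverse implication is symmetric, using $(5)\Rightarrow(1)$ together with the elementary fact that in a lattice $A \cap B \subseteq P$ with $P$ prime forces $A \subseteq P$ or $B \subseteq P$, which rules out a third minimal prime ideal.

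\emph{Main obstacle.} Apart from the routine graph-theoretic bookkeeping, the real work is this element/ideal dictionary, and in particular the proof that the two infima above collapse to $0$; this is exactly the step where the hypotheses ``$1$-compact, compactly generated, $L_*$ multiplicatively closed'' are genuinely used, via Corollary \ref{1.4.} (and, indirectly, Lemma \ref{mini}), and it is the part I expect to be most delicate to phrase cleanly. A second, smaller point is the upgrade from $diam(\Gamma^m(L)) \leq 2$ to $diam(\Gamma^m(L)) = 2$ in the converse: by Theorem \ref{2.5.} the graph is complete bipartite $K_{m,n}$, and one must rule out $m = n = 1$; the relevant observation is that in a reduced lattice adjacent vertices are incomparable (if $x \leq y$ and $x \cdot y = 0$ then $x^2 = 0$, so $x = 0$), after which two non-adjacent vertices can be located — this degenerate case is the one place where care is needed in conjunction with the hypothesis that $Z(L)$ is not an ideal.
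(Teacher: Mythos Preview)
Your approach is essentially the paper's: both directions run through the element/ideal dictionary via Theorem \ref{2.7.}, use Theorem \ref{2.13.} as the core lattice-theoretic input (after Lemma \ref{equi} and Lemma \ref{a}), and invoke Theorem \ref{2.20.} to force reducedness in the forward direction. For the final upgrade from $diam \leq 2$ to $diam = 2$ in the converse, the paper simply appeals to Lemma \ref{2.11a.} rather than your $K_{m,n}$ analysis; your instinct that this step is delicate is correct, since Lemma \ref{2.11a.} only says that $diam = 1$ \emph{implies} $Z(L)$ is not an ideal, which does not exclude $diam = 1$ under the standing hypothesis --- indeed the four-element Boolean lattice satisfies all hypotheses with exactly two minimal prime elements but has $diam(\Gamma^m(L)) = 1$, which is why Theorem \ref{2.26.}(2) adds the clause $|(Z(L))^{*}| > 2$.
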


\begin{proof} Suppose that $diam(\Gamma^m(L)) = 2$. By Theorem \ref{2.20.}, $L$ is
a reduced lattice. Hence  by Lemma \ref{equi}, Lemma \ref{a} and Theorem \ref{2.13.},
$L$ has exactly two minimal prime ideals, say $P$ and $Q$. Since $L$ is a 0-distributive lattice, intersection of all minimal prime ideals of $L$ is zero . Hence $P \cap Q ={\{0\}}$. Therefore by Theorem \ref{2.7.}, $L$ has two minimal prime elements say $p_{1}$ and $p_{2}$ such that $p_{1}
\wedge p_{2}=0$. Now, we show $p_1$ and $p_2$ are the only two minimal
prime elements $L$. Suppose $L$ has a third minimal prime element, say
$p_{3}$. Then $p_{1} \wedge p_{2}=0 \leq p_{3}$ gives $p_{1} \leq
p_{3}$ or $p_{2} \leq p_{3}$, a contradiction to the minimality of
$p_{3}$. Hence $L$ has exactly two minimal prime elements $p_1,p_2$.

         Conversely, suppose that $L$ is reduce with exactly two minimal prime
elements, say $p_1, p_2$. By Corollary \ref{1.4.}, we have $p_1 \wedge p_2=0$. Therefore by Theorem \ref{2.7.}, $L$ has two minimal
prime ideals, say $P$ and $Q$ such that $P \cap Q =\{0\}$.  Suppose $L$
has third a minimal prime ideal, say $R$. Then $P \cap Q =\{0\} \subseteq
R$ gives $P \subseteq R$ or $Q \subseteq R$, a contradiction to the
minimality of $R$. Hence, $L$ has exactly two minimal prime
ideals. By Theorem \ref{2.13.}, $diam(\Gamma(L)) \leq 2$ and hence
$diam(\Gamma^{m}(L)) \leq 2$. Thus $diam(\Gamma^m(L))=2$, by Lemma \ref{2.11a.}.
\end{proof}

\begin{cor}\label{2.15.} Let $R$ be a reduced ring such that $Z(R)$ is not an ideal. Then  $diam(\mathbb{AG}(R)) = 2$ if and only if $R$ has exactly two
minimal primes.\end{cor}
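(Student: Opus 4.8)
The plan is to obtain this as a direct consequence of Theorem \ref{2.14.} applied to the ideal lattice $L = Id(R)$. First I would recall, via Remark \ref{b1}, that for a reduced ring $R$ the lattice $L = Id(R)$ is a reduced, $1$-compact, compactly generated multiplicative lattice; moreover its set $L_*$ of compact elements is exactly the set of finitely generated ideals of $R$, and since a product of two finitely generated ideals is finitely generated, $L_*$ is multiplicatively closed. So $L = Id(R)$ satisfies every hypothesis imposed on $L$ in Theorem \ref{2.14.}.

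Next I would make the translation explicit. As noted in the introduction, the annihilating-ideal graph $\mathbb{AG}(R)$ is precisely the zero-divisor graph $\Gamma^m(Id(R))$: the vertex set $(Z(L))^{*}$ of $\Gamma^m(L)$ is the set of annihilating ideals of $R$, and two such ideals $I,J$ are adjacent exactly when $IJ=(0)$. Hence $Z(L) = (Z(L))^{*}\cup\{0\}$ is what the statement calls "$Z(R)$", and the hypothesis that $Z(R)$ is not an ideal is exactly the hypothesis that $Z(L)$ is not an ideal of $L$ required in Theorem \ref{2.14.} (equivalently, under this dictionary, the condition of Corollary \ref{2.23.}, namely that there is a pair of annihilating ideals $I,J$ with $Ann(I+J)=(0)$). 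Finally, by the last assertion of Remark \ref{b1}, the minimal prime elements of $L=Id(R)$ are precisely the minimal prime ideals of $R$.

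With these identifications in hand the corollary is immediate: $diam(\mathbb{AG}(R)) = diam(\Gamma^m(L))$, and by Theorem \ref{2.14.} this equals $2$ if and only if $L$ is reduced with exactly two minimal prime elements. Since $R$ is reduced, $L$ is automatically reduced, so the condition collapses to "$L$ has exactly two minimal prime elements", which by Remark \ref{b1} holds if and only if $R$ has exactly two minimal primes. Combining these equivalences yields $diam(\mathbb{AG}(R)) = 2 \iff R$ has exactly two minimal primes.

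The only place any care is needed — and hence the only possible obstacle — is the bookkeeping in the translation step: one must be certain that "$Z(R)$ is not an ideal" is read as "$Z(Id(R))$ is not an ideal of the lattice $Id(R)$" (equivalently, the existence of annihilating ideals $I,J$ of $R$ with $Ann(I+J)=(0)$), and that $Id(R)$ genuinely meets all the standing hypotheses of Theorem \ref{2.14.}. Everything else is a mechanical transport along the correspondence $R \leftrightarrow Id(R)$, $\mathbb{AG}(R) \leftrightarrow \Gamma^m(Id(R))$, minimal primes $\leftrightarrow$ minimal prime elements, exactly as already used for Corollaries \ref{2.8.}, \ref{2.6a.}, and \ref{2.23.}.
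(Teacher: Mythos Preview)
Your proposal is correct and is exactly the approach the paper takes: the corollary is stated immediately after Theorem~\ref{2.14.} without proof, so it is meant to follow by applying that theorem to $L=Id(R)$ and invoking Remark~\ref{b1} for the dictionary between $\mathbb{AG}(R)$ and $\Gamma^m(Id(R))$ and between minimal primes of $R$ and minimal prime elements of $Id(R)$. Your careful check that $L_*$ is multiplicatively closed and your clarification that ``$Z(R)$ is not an ideal'' must be read as ``$Z(Id(R))$ is not an ideal of $Id(R)$'' are exactly the bookkeeping details the paper leaves implicit.
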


\begin{thm}\label{2.24.} Let $L$ be a $1$-compact, compactly generated lattice with $L_{*}$ as a multiplicatively closed set and $Z(L)$ is not an
ideal. Then $diam(\Gamma^m(L)) = 3$ if and only if either $L$ is
reduced with more than two minimal prime elements or $L$ is
non-reduced.\end{thm}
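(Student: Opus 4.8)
The plan is to prove Theorem \ref{2.24.} by splitting into the reduced and non-reduced cases and reducing each direction to results already established. The key dichotomy is supplied by Theorem \ref{2.9.} ($diam(\Gamma^m(L)) \leq 3$ always), by Lemma \ref{2.11.} ($Z(L)$ an ideal forces $diam \leq 2$), and by the reduced-case Theorem \ref{2.14.} together with the non-reduced Theorem \ref{2.22.}. Since the standing hypothesis is that $Z(L)$ is \emph{not} an ideal, Lemma \ref{2.11.} is not directly in force, and I expect $diam(\Gamma^m(L)) \in \{1,2,3\}$.

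For the forward direction, assume $diam(\Gamma^m(L)) = 3$. If $L$ is non-reduced we are done immediately, so suppose $L$ is reduced. Then I must rule out $diam = 1$ and $diam = 2$ and count minimal primes. Since $Z(L)$ is not an ideal, Lemma \ref{2.11a.} does not yield a contradiction for $diam = 1$; instead I note that if $diam(\Gamma^m(L)) = 1$ then $\Gamma^m(L)$ is complete, hence certainly complete bipartite would fail unless there are exactly two vertices — more cleanly, a complete graph on $\geq 3$ vertices contains a triangle, which by Theorem \ref{2.6.} (its contrapositive) can be consistent only if $L$ has more than two minimal prime elements; but a complete graph of diameter $1$ with $\geq 2$ vertices would need $x \cdot x = 0$ forcing $x = 0$ by reducedness once $|V| \geq 3$, a contradiction, so in the reduced case $diam = 1$ is impossible when $Z(L)$ is not an ideal and $L$ has a nontrivial zero-divisor graph. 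Having excluded $diam = 1$, the only alternative to $diam = 3$ is $diam = 2$, and by Theorem \ref{2.14.} $diam(\Gamma^m(L)) = 2$ holds if and only if $L$ is reduced with exactly two minimal prime elements. Hence $diam(\Gamma^m(L)) = 3$ with $L$ reduced forces $L$ to have more than two minimal prime elements (it has at least one by Corollary \ref{1.4.}, and cannot have exactly two).

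For the converse, first suppose $L$ is reduced with more than two minimal prime elements. Then by Theorem \ref{2.14.}, $diam(\Gamma^m(L)) \neq 2$ (the characterization there forces exactly two minimal primes for $diam = 2$). Also $diam \neq 1$: as above, a complete zero-divisor graph on $\geq 3$ vertices is impossible in a reduced lattice, and Theorem \ref{2.6.} guarantees $\Gamma^m(L)$ contains a triangle, so $|V(\Gamma^m(L))| \geq 3$. By Theorem \ref{2.9.}, $diam(\Gamma^m(L)) \leq 3$, so $diam(\Gamma^m(L)) = 3$. Next suppose $L$ is non-reduced; since $Z(L)$ is not an ideal, Theorem \ref{2.22.} (equivalently Theorem \ref{2.20.}) gives $diam(\Gamma^m(L)) = 3$ directly. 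This completes both directions.

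The main obstacle I anticipate is the careful elimination of the $diam = 1$ possibility in the reduced case: one must argue that $\Gamma^m(L)$ is not a single edge $K_2$. The cleanest route is to invoke Theorem \ref{2.6.} to produce a triangle (so $|V| \geq 3$), and then observe that in a reduced lattice no vertex can be adjacent to itself, so a graph of diameter $1$ on $\geq 3$ vertices cannot exist. The rest of the argument is pure bookkeeping among Theorems \ref{2.9.}, \ref{2.14.}, \ref{2.22.} and Corollary \ref{1.4.}; no new lattice-theoretic computation is required.
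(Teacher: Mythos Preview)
Your overall strategy matches the paper's: both directions are reduced to Theorems \ref{2.14.} and \ref{2.22.}. But there are two genuine gaps in the execution.

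\medskip
\textbf{Forward direction.} You assume $diam(\Gamma^m(L))=3$ and $L$ reduced, then write that you ``must rule out $diam=1$ and $diam=2$''. There is nothing to rule out: $diam=3$ is the hypothesis. What you actually need is to count minimal primes. You invoke Theorem \ref{2.14.} to exclude \emph{exactly two} minimal primes and Corollary \ref{1.4.} for ``at least one'', then conclude ``more than two''. That is a logical slip: at-least-one and not-exactly-two still allows \emph{exactly one}. The paper disposes of this case by observing that if $L$ is reduced with a unique minimal prime $p$, then $p=\bigwedge\{\text{minimal primes}\}=0$, so $0$ is prime, hence $V(\Gamma^m(L))=\emptyset$ --- contradicting $diam=3$. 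You need that sentence (or the equivalent remark that $Z(L)=\{0\}$ would then be an ideal).

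\medskip
\textbf{Converse, reduced case.} You correctly use Theorem \ref{2.14.} to get $diam\neq 2$ and Theorem \ref{2.6.} to produce a triangle, hence $|V(\Gamma^m(L))|\geq 3$. The problem is your justification for $diam\neq 1$. You assert that ``in a reduced lattice no vertex can be adjacent to itself, so a graph of diameter $1$ on $\geq 3$ vertices cannot exist''. The premise is vacuous (the graph is simple by definition) and the conclusion is false as stated: $K_3$ is a simple graph of diameter $1$ on three vertices. The claim you want --- that $\Gamma^m(L)$ cannot be complete on $\geq 3$ vertices when $L$ is reduced --- is true, but it needs the $a\vee b$ trick (essentially the proof of Theorem \ref{2.26.}(1)): if $a,b,c$ are three distinct vertices in a complete $\Gamma^m(L)$, then $(a\vee b)\cdot c=0$ puts $a\vee b$ in $V(\Gamma^m(L))$; since $a\vee b\neq a$ (else $b\leq a$ and $b=a\wedge b=0$), completeness gives $(a\vee b)\cdot a=a^2=0$, whence $a=0$ by reducedness. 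Your phrase ``would need $x\cdot x=0$'' gestures at this, but the crucial step --- that $a\vee b$ is a \emph{new} vertex forced to be adjacent to $a$ --- is missing. Once you supply it, the argument goes through; the paper itself is terse here (it just says ``follows from Theorem \ref{2.14.}''), so your added detail, done correctly, would actually be an improvement.
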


\begin{proof} (1) In the case, $L$ is a reduced lattice with more than two minimal prime
elements, then $diam(\Gamma^m(L)) = 3$ follows from Theorem \ref{2.14.} and in the case $L$ is non-reduced,
then the result follows from Theorem \ref{2.22.}.

      Conversely, suppose that $diam(\Gamma^m(L)) = 3$.

      (1) Suppose $L$
is a reduced lattice. If $L$ has exactly one minimal prime
element, then  $V(\Gamma^m(L)) = \emptyset$, a contradiction.
Therefore $L$ can't have exactly one minimal prime element. Now, if
$L$ has exactly two minimal prime element, then by Theorem
\ref{2.14.}, $diam(\Gamma^m(L))= 2$, again a contradiction.
Therefore $L$ has more than two minimal prime elements.

(2) By Theorem \ref{2.14.}, $L$ is non-reduced.\end{proof}

\begin{cor}[{Behboodi and Rakeei \cite[Theorem 1.3]{br2}}] \label{2.25.} Let $R$ be a ring such that $Z(R)$ is
not an ideal. Then the following statements are equivalent:

\begin{enumerate} \item $diam(\mathbb{AG}(R))=3$ \item Either $R$
is non-reduced or $R$ is a reduced ring with more than two minimal
primes.\end{enumerate}\end{cor}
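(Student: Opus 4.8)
The plan is to obtain this corollary as the ring-theoretic specialization of Theorem \ref{2.24.}. Set $L = Id(R)$, the ideal lattice of $R$. As noted in the Introduction and in Remark \ref{b1}, $L$ is a $1$-compact, compactly generated multiplicative lattice in which the product of two compact elements is compact; since $L_*$ is precisely the set of finitely generated ideals of $R$ and the product of two finitely generated ideals is finitely generated, $L_*$ is a multiplicatively closed subset of $L$. Moreover, by the description of $\mathbb{AG}(R)$ in the Introduction, the annihilating-ideal graph $\mathbb{AG}(R)$ is exactly the zero-divisor graph $\Gamma^m(L)$: the non-zero ideals of $R$ with non-zero annihilator are exactly the non-zero zero divisors of $L$, and $IJ=(0)$ in $R$ means $I\cdot J = 0$ in $L$. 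Hence $diam(\mathbb{AG}(R)) = diam(\Gamma^m(L))$, and the standing hypothesis that $Z(R)$ is not an ideal becomes the hypothesis that $Z(L) = V(\Gamma^m(L))\cup\{0\}$ is not an ideal of $L$.

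Next I would translate the two clauses on the right-hand side through Remark \ref{b1}. That remark gives that $R$ is reduced if and only if $L$ is reduced, and that when $R$ is reduced the minimal primes of $R$ are exactly the minimal prime elements of $L$. Consequently, ``$R$ is non-reduced'' is equivalent to ``$L$ is non-reduced'', and ``$R$ is a reduced ring with more than two minimal primes'' is equivalent to ``$L$ is reduced with more than two minimal prime elements'' (cf.\ also Corollary \ref{2.23.} for the non-reduced case of the diameter statement).

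With these identifications in hand, the equivalence is immediate: applying Theorem \ref{2.24.} to $L = Id(R)$, which satisfies all of its hypotheses, gives that $diam(\Gamma^m(L)) = 3$ if and only if $L$ is reduced with more than two minimal prime elements or $L$ is non-reduced, and rewriting both sides via the dictionary above yields exactly $(1)\Leftrightarrow(2)$ for $R$. I do not expect any genuine obstacle here; the argument is purely a change of vocabulary, and Theorem \ref{2.24.} carries the entire load. The only points deserving an explicit word are that $L_*$, the set of finitely generated ideals, is multiplicatively closed, so that Theorem \ref{2.24.} applies, and that ``$Z(R)$ not an ideal'' is the faithful image of ``$Z(L)$ not an ideal'' under the identification $\mathbb{AG}(R)=\Gamma^m(Id(R))$; both are routine.
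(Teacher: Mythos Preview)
Your proposal is correct and matches the paper's approach: the paper presents this as an immediate corollary of Theorem \ref{2.24.} with no separate proof, and your argument is precisely the specialization $L = Id(R)$ together with the dictionary from Remark \ref{b1} and the identification $\mathbb{AG}(R) = \Gamma^m(Id(R))$. There is nothing to add.
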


\begin{thm}\label{2.26.} Let $L$ be  $1$-compact, compactly generated lattice with $L_{*}$ as a multiplicatively closed set and $Z(L)$ is not an
ideal. Then $1 \leq diam(\Gamma^m(L)) \leq 3$ and
\begin{enumerate} \item $diam(\Gamma^m(L))=1$ if and only if $L$
is reduced and $|(Z(L))^{*}|=|A(L)|=2$, where $A(L)$ is the set of
atoms of $L$; \item $diam(\Gamma^m(L)) = 2$ if and only if $L$ is
reduced with exactly two minimal prime elements and $|(Z(L))^{*}|
> 2$; \item $diam(\Gamma^m(L)) = 3$ if and only if $L$ is reduced
with more than two minimal prime elements or $L$ is
non-reduced.\end{enumerate}\end{thm}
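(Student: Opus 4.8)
The plan is to assemble Theorem \ref{2.26.} almost entirely from results already established in the paper, treating it as a three-way case analysis on $diam(\Gamma^m(L))$, which by Theorem \ref{2.9.} lies in $\{1,2,3\}$ once we know $\Gamma^m(L)$ is nonempty (and it is nonempty precisely because $Z(L)$ is not an ideal, so there are at least two vertices). The bulk of the work is parts (2) and (3), which are immediate: part (2) is literally Theorem \ref{2.14.}, except that I must add the observation $|(Z(L))^*|>2$, which I would extract by showing that under the hypotheses of part (1) the diameter is exactly $1$ and $|(Z(L))^*|=2$, so that $diam = 2$ forces $|(Z(L))^*|>2$; part (3) is exactly Theorem \ref{2.24.}. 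So the only genuinely new content is part (1), the characterization of $diam(\Gamma^m(L))=1$.

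For part (1), I would argue both directions. For the forward direction, assume $diam(\Gamma^m(L))=1$, i.e. $\Gamma^m(L)$ is a complete graph (every pair of distinct vertices is adjacent). First, $L$ is reduced: if not, then by Lemma \ref{2.18.} (or directly, taking a nonzero nilpotent $q$ with $q^2 = 0$ and $a$ a vertex, one builds $a \vee (b\cdot q)$-type vertices that cannot all be mutually adjacent), one produces two vertices $x,y$ with $x\cdot y \neq 0$; alternatively invoke Lemma \ref{2.11a.} together with Theorem \ref{2.22.}/\ref{2.20.} to see a non-reduced $L$ with $Z(L)$ not an ideal has diameter $3$. Given $L$ reduced, I claim every vertex is an atom: if $x$ is a vertex and $0 < z < x$ with $z$ a vertex too, then $x\cdot z \geq z \cdot z = z^2 \neq 0$ (using reducedness, $z^2 = 0 \Rightarrow z = 0$), contradicting adjacency of $x$ and $z$ — but I need $z$ to be a vertex; since $x$ is a vertex there is $w\neq 0$ with $x\cdot w = 0$, hence $z\cdot w \leq x \cdot w = 0$, so $z$ is a vertex, and then $x$ adjacent to $z$ gives $x\cdot z = 0$, but also $z \leq x$ gives $z^2 = z\cdot z \leq x\cdot z = 0$, contradiction. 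Hence every vertex is an atom, so $(Z(L))^* \subseteq A(L)$. Conversely each atom that is a zero divisor is a vertex. If there were three distinct vertex-atoms $a_1,a_2,a_3$, then pairwise products are $0$; but $a_1$ being an atom with $a_1\cdot a_2 = 0$ and... here I would show $|(Z(L))^*| = 2$: suppose $a_1, a_2, a_3$ are three distinct atoms that are zero divisors; since $diam = 1$ they are pairwise adjacent, $a_i \cdot a_j = 0$ for $i\neq j$. Consider whether this is possible — in fact it is consistent with the Figure 1(c) example having only two, so the claim $|(Z(L))^*| = 2$ needs the reduced hypothesis plus completeness more carefully; I would note that if $a_1,a_2,a_3$ are atoms with all pairwise products zero, then $a_1 \vee a_2$ is a zero divisor (it annihilates, say, a suitable element) and is not an atom, hence not a vertex by the above, yet it has a nonzero annihilator — actually I should instead directly use that $Z(L)$ is not an ideal: if there were $\geq 3$ vertices, the diameter-2 characterization Theorem \ref{2.14.} would be relevant, but more cleanly: a complete graph with $\geq 3$ vertices all atoms in a reduced lattice would make $\{0\} = a_1\wedge a_2$ etc., and one checks $V(\Gamma(L))\cup\{0\}$ would then need examination. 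I expect the cleanest route is: reduced $+$ $diam = 1$ $\Rightarrow$ $(Z(L))^* = A(L)$ and $|A(L)| = 2$, the latter because three mutually non-$\cdot$-related atoms would force $a_1\vee a_2$ to annihilate $a_3$ yet $a_1\vee a_2 \not\leq$ any atom, contradicting $(Z(L))^* \subseteq A(L)$ provided $a_1 \vee a_2$ is genuinely a vertex, which it is since $(a_1\vee a_2)\cdot a_3 = 0$.

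For the converse of part (1), assume $L$ is reduced with $|(Z(L))^*| = |A(L)| = 2$, say $(Z(L))^* = \{a_1, a_2\}$. Then $\Gamma^m(L)$ has exactly two vertices; they must be adjacent (else $a_1$ would need some other vertex as a neighbor, but there is none), so $\Gamma^m(L) = K_2$ and $diam = 1$. I would also reconcile with Lemma \ref{2.11a.}, which says reduced plus $diam = 1$ implies $Z(L)$ not an ideal, consistent with the standing hypothesis. The main obstacle — really the only subtle point — is the forward-direction claim that $diam(\Gamma^m(L)) = 1$ together with reducedness forces $|(Z(L))^*| = 2$ and that these two elements are exactly the atoms of $L$; everything else is bookkeeping and citation of Theorems \ref{2.9.}, \ref{2.14.}, \ref{2.20.}, \ref{2.22.}, \ref{2.24.} and Lemma \ref{2.11a.}. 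I would finish by noting $1\leq diam(\Gamma^m(L)) \leq 3$ is then forced since the three cases in (1)–(3) are exhaustive and mutually exclusive and cover every possibility allowed by Theorem \ref{2.9.}.
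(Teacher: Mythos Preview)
Your proposal is correct and follows essentially the same route as the paper: the diameter bounds come from Theorem~\ref{2.9.} and the observation that $Z(L)$ not an ideal forces at least two vertices; parts (2) and (3) are direct citations of Theorems~\ref{2.14.} and~\ref{2.24.}; and for part (1) you deduce reducedness from Theorem~\ref{2.22.}, show every vertex is an atom via the $z^2=0$ contradiction, and rule out three or more atoms by noting that $a_1\vee a_2$ would then be a non-atom vertex. The paper's proof is the same, only phrased more tersely (it uses $x\wedge y=0$ via Lemma~\ref{equi} rather than $z^2=0$, and writes $p\cdot(q\vee r)=0$ for the three-atom contradiction), so your exploratory detours aside, there is no substantive difference.
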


\begin{proof}  By Theorem \ref{2.9.}, $diam(\Gamma^m(L)) \leq 3$. If $diam(\Gamma^m(L))=0$, then $\Gamma^m(L)$ has only one vertex,
that is, $Z(L)$ is an ideal of $L$, a contradiction to our
assumption that $Z(L)$ is not an ideal of $L$. Hence $1 \leq
diam(\Gamma^m(L)) \leq 3$.

(1)  Suppose $diam(\Gamma^m(L))=1$. By Theorem
\ref{2.22.}, $L$ is a reduced lattice. As $L$ is reduced lattice,
$A(L) \subseteq (Z(L))^{*}$. Let $x \in (Z(L))^{*}$. If $x$ is an
atom, then we are through. If $x$ is not an atom, then there
exists $y$ such that $0 < y< x$. Clearly, $y \in (Z(L))^{*}$, as $x \in
(Z(L))^{*}$. Since $diam(\Gamma^m(L))=1$ and $L$ is a reduced,
 we must have $x\cdot y=x \wedge y =0$. Therefore $y =0$, a
contradiction to $y \in (Z(L))^{*}$. Hence $x$ is an atom and
$(Z(L))^{*} = A(L)$. Now assume that $|(Z(L))^{*}|=|A(L)| > 2$.
Let $p, q, r$ be any three distinct atoms of $L$. Clearly, $p \cdot q = p
\cdot r = 0$ and this gives $p \cdot (q \vee r)=0$. This implies
$(q \vee r) \in (Z(L))^{*} = A(L)$, a contradiction. The converse
is obvious.

(2) Follows from  Theorem \ref{2.14.}.

(3) Follows from Theorem \ref{2.24.}.
\end{proof}

\begin{lem}\label{2.29.} Let $L$ be a reduced, $1$-compact, compactly generated lattice with $L_{*}$ as a multiplicatively closed set
having finitely many minimal prime elements. Then $ x \in Z(L)$ if
and only if $x$ is contained in at least one minimal prime
element.\end{lem}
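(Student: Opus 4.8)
The plan is to prove both directions separately, with the nontrivial direction being ``$x \in Z(L) \Rightarrow x$ lies in some minimal prime element.'' For the easy direction, suppose $x \leq p$ for some minimal prime element $p$. Since $L$ is reduced, $1$-compact and compactly generated with $L_*$ multiplicatively closed, I would invoke Corollary \ref{1.4.} to pick a prime element $q$ with $x \not\leq q$; such a $q$ contains a minimal prime element $p'$, and one checks $p' \neq p$ is not automatic, so instead I would argue directly: pick a nonzero compact $c \leq x$, apply Lemma \ref{mini} to the minimal prime $p$ and the compact element $c \leq p$ to obtain a compact $y \not\leq p$ with $c^n \cdot y = 0$ for some $n$; in a reduced lattice this forces $c \cdot y = 0$ (since $(c\cdot y)^n \leq c^n \cdot y = 0$). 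Then $x \cdot y \geq c \cdot y$... wait, that is the wrong direction, so instead note $c \leq x$ gives $c \cdot y \leq x \cdot y$; to get $x \cdot y = 0$ I would instead use that in a reduced lattice $x^* = \bigvee\{z : x \cdot z = 0\}$ and observe $c \cdot y = 0$ with $y \neq 0$ already shows $x \in Z(L)$ once we know $x \cdot y' = 0$ for some nonzero $y'$ — and indeed since $c \neq 0$, if $x \cdot y = 0$ we are done, but we only have $c \cdot y = 0$. The clean fix: since $c \cdot y = 0$ and the lattice is reduced, set $y' = x \cdot y$; then $c \cdot y' \leq c \cdot y = 0$ is not quite it either. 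I would therefore argue that $y \neq 0$, $y \not\leq p$, and $x \cdot y \leq p$ (as $x \leq p$), so if $x \cdot y = 0$ we have $x \in Z(L)$; if $x \cdot y \neq 0$, pick nonzero compact $d \leq x \cdot y$, then $d^2 \leq x \cdot y \cdot x \cdot y$, and since $c \cdot y = 0$ with $c$ ranging over all compacts below $x$ gives $x \cdot y = 0$ by compact generation and infinite distributivity of $\cdot$ — this is the point I would make carefully: $x = \bigvee c_\alpha$ with $c_\alpha$ compact below $p$, each $c_\alpha \cdot y_\alpha = 0$ for suitable $y_\alpha \not\leq p$, but the $y_\alpha$ vary. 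So the correct approach for this direction is: it suffices to find one nonzero $y$ with $x \cdot y = 0$; since $p$ is minimal prime, $p = \bigvee\{z^* : z \leq p\}$ type statements; the cleanest route is Lemma \ref{mini} applied once to a single compact $0 \neq c \leq x \leq p$ giving compact $y \not\leq p$ with $c \cdot y = 0$ (reduced), hence $c \in Z(L)$, hence $x \in Z(L)$ because $c \leq x$ and $c \cdot y = 0$ implies... no. Let me just say: since $c \cdot y = 0$ and $c \neq 0$, we have $y \in Z(L)$, and $0 \neq c \leq x$ with $x \cdot (\text{something})$; the honest statement is that membership in $Z(L)$ is a property of having a nonzero annihilator, and $x \geq c$ need not inherit it. Therefore the right tool is: $x \leq p$ minimal prime $\Rightarrow x^* \not\leq p$, which follows from Lemma \ref{mini} by taking joins — if $x^* \leq p$ then for the compact $c \leq x$ the element $y$ with $c \cdot y = 0$ satisfies $y \leq c^* \leq x^* \leq p$, contradicting $y \not\leq p$; hence $x^* \neq 0$, i.e., $x \in Z(L)$.

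For the converse direction, suppose $x \in Z(L)$, so there is $y \neq 0$ with $x \cdot y = 0$ (equivalently $x \wedge y = 0$ since $L$ is reduced, using Lemma \ref{equi} and Lemma \ref{a}). By Corollary \ref{1.4.}, since $y \neq 0$ there is a prime element not containing $y$, and every prime element contains a minimal prime element; so choose a minimal prime element $p$ with $y \not\leq p$. Since $x \cdot y = 0 \leq p$ and $p$ is prime with $y \not\leq p$, we conclude $x \leq p$. This shows $x$ lies in the minimal prime element $p$, completing the converse.

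The main obstacle, as the flailing above indicates, is pinning down the forward direction ($x \leq$ some minimal prime $\Rightarrow x \in Z(L)$) cleanly: one must leverage the minimality of the prime element via Lemma \ref{mini} and convert the ``$c^n \cdot y = 0$'' conclusion into an honest nonzero annihilator of $x$ itself, not merely of a compact piece of $x$. The key realization is that for a minimal prime $p$ and any $x \leq p$, the residual $x^* = \bigvee\{z : x \cdot z = 0\}$ satisfies $x^* \not\leq p$: if it were $\leq p$, then taking any nonzero compact $c \leq x$ (compact generation) and applying Lemma \ref{mini} to $c \leq p$ yields a compact $y \not\leq p$ with $c^n \cdot y = 0$, hence $c \cdot y = 0$ by reducedness (as $(cy)^n \leq c^n y = 0$), so $y \leq c^* \leq x^* \leq p$, contradicting $y \not\leq p$. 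Thus $x^* \neq 0$, giving $x \in Z(L)$. I expect this to be the crux; the finiteness hypothesis on the number of minimal primes is actually not needed for either direction as argued, but it is harmless to keep, and it guarantees (via $0$-distributivity, Lemma \ref{a}, and the standard fact that the intersection of all minimal primes is $0$) that the minimal prime elements genuinely ``cover'' all zero divisors in the expected way.
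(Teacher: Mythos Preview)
Your argument for the direction ``$x \in Z(L) \Rightarrow x \leq p$ for some minimal prime $p$'' is correct and in fact cleaner than the paper's: you pick a minimal prime $p$ with $y \not\leq p$ via Corollary~\ref{1.4.} and use primeness directly, whereas the paper first writes $\bigwedge_{i=1}^n p_i = 0$ and argues by contradiction.

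The other direction, ``$x \leq p$ for some minimal prime $p \Rightarrow x \in Z(L)$,'' contains a genuine error. In your final argument you assert $y \leq c^{*} \leq x^{*} \leq p$, but from $c \leq x$ one gets $x^{*} \leq c^{*}$, not $c^{*} \leq x^{*}$: anything annihilating $x$ also annihilates $c$, so the annihilator grows as the element shrinks. The intended contradiction therefore does not follow, and the argument collapses precisely at the step you flag as the crux.

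Worse, your claim that the finiteness hypothesis is not needed for this direction is false. Take $R = \prod_{n \in \mathbb{N}} k$ for a field $k$ and $L = Id(R)$; this is reduced, $1$-compact, compactly generated with $L_{*}$ multiplicatively closed, but has infinitely many minimal primes. A minimal prime element $P$ arising from a free ultrafilter contains every coordinate idempotent $(e_i)$, so any ideal $J$ with $P \cdot J = 0$ must satisfy $(e_i) \cdot J = 0$ for all $i$, forcing $J = 0$. Thus $P \leq P$ lies below a minimal prime yet $P \notin Z(L)$. No argument that bypasses finiteness can succeed here.

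The paper supplies the missing idea. With minimal primes $p_1, \ldots, p_n$ one has $\bigwedge_{i} p_i = 0$ by Corollary~\ref{1.4.}, and $q := p_2 \wedge \cdots \wedge p_n \neq 0$ (otherwise primeness of $p_1$ would force some $p_j \leq p_1$, contradicting minimality). Then $p_1 \cdot q \leq p_1 \wedge q = 0$, so every $x \leq p_1$ satisfies $x \cdot q = 0$ with $q \neq 0$, giving $x \in Z(L)$. Finiteness is exactly what manufactures a \emph{single} nonzero annihilator for all of $p_1$ at once, which your compact-by-compact approach via Lemma~\ref{mini} cannot do.
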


\begin{proof}Let $ x \in Z(L)$. If $x =0$, then we are through. If $x \neq
0$, then there exists  $y \neq 0$ such that $x \cdot y = 0$. Since
$L$ has finitely may minimal prime elements, say $p_{1},
p_{2},....p_{n}$ and $L$ is reduced lattice, by Corollary \ref{1.4.},   $0 =
p_{1} \wedge p_{2} \wedge .... \wedge p_{n}$. If $x$ is not
contained in any minimal prime element, then $y \leq p_{1} \wedge
p_{2} \wedge .... \wedge p_{n}=0$, a contradiction to $y \neq 0$.
Hence $x$ is contained in at least one minimal prime element.

Now, we show that every minimal prime element is in $Z(L)$. Without
loss of generality we will show that $p_{1}$ is in $Z(L)$. We
claim that $p_{i} \wedge p_{i+1} \wedge .... \wedge p_{n} \neq 0$
for $i \geq 2$. If not, then  $p_{i} \wedge p_{i+1}
\wedge .... \wedge p_{n} = 0 \leq p_{1}$ for $i \geq 2$. This
gives some $p_{j} \leq p_{1}$ for $2 \leq j \leq n$, contradicting
the minimality of $p_{1}$. Hence $p_{i} \wedge p_{i+1} \wedge ....
\wedge p_{n} \neq 0$ for $i \geq 2$, and this together with $p_1 \wedge (p_2\wedge p_3 \wedge \cdots p_n)=0$ gives $p_{1}\in Z(L)$. Hence every minimal prime element is in $Z(L)$. Therefore
if $x$ is contained in at least one minimal prime element then $ x
\in Z(L)$. \end{proof}

\begin{lem}\label{2.29a.} Let $L$ be a reduced, $1$-compact, compactly generated lattice with $L_{*}$ as a multiplicatively closed set
having finitely many minimal prime elements. If $p$ is minimal
prime element of $L$, then $p$ contain precisely one of $a$ and
$a^{*}$.\end{lem}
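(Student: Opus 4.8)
The plan is to reduce the statement to two elementary facts: that $a\cdot a^{*}=0$ in a reduced multiplicative lattice, and that $0$ is the meet of the minimal primes. First I would record $a\cdot a^{*}=0$: since $L$ is reduced, $a^{*}=\bigvee\{x\in L\mid a\cdot x=0\}$, and because multiplication distributes over arbitrary joins we get $a\cdot a^{*}=\bigvee\{a\cdot x\mid a\cdot x=0\}=0$. As $0\le p$ and $p$ is prime, this immediately yields $a\le p$ or $a^{*}\le p$; so $p$ always contains \emph{at least} one of $a$ and $a^{*}$, and the whole content of the lemma is that it cannot contain \emph{both}.

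I would prove the latter by contradiction, supposing $a\le p$ and $a^{*}\le p$. Write $p=p_{1},p_{2},\dots,p_{n}$ for the (finitely many) minimal prime elements; as noted in the proof of Lemma~\ref{2.29.}, Corollary~\ref{1.4.} gives $p_{1}\wedge\cdots\wedge p_{n}=0$. The case $a=0$ is immediate, for then $a^{*}=1\not\le p$. For $a\ne 0$, put $J=\{\,j\mid a\not\le p_{j}\,\}$; this is nonempty since $a\ne 0=\bigwedge_{j}p_{j}$, and $1\notin J$ since $a\le p_{1}$. The crux is to show that $q:=\bigwedge_{j\in J}p_{j}$ satisfies $a\cdot q=0$: for each index $k$ we have $a\cdot q\le p_{k}$ — if $k\in J$ because $q\le p_{k}$, and if $k\notin J$ because then $a\le p_{k}$ and $a\cdot q\le a\wedge q\le a$ — hence $a\cdot q\le\bigwedge_{k}p_{k}=0$, so $q\le a^{*}$.

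To finish, note $\prod_{j\in J}p_{j}\le\bigwedge_{j\in J}p_{j}=q\le a^{*}\le p_{1}$; since $p_{1}$ is prime and $J$ is finite and nonempty, some $p_{j}$ with $j\in J$ lies below $p_{1}$, and as $j\ne 1$ this is a prime element strictly below the minimal prime $p_{1}$ — a contradiction. The step that needs the most care is the double use of the $p_{j}$'s in showing $a\cdot q=0$, namely keeping straight which minimal primes are above $a$ and which are not; everything else is a routine application of the multiplicative-lattice axioms, the definition of $a^{*}$, and minimality of primes, and no direct use of compactness is needed. (One could alternatively feed a nonzero compact $c\le a$ into Lemma~\ref{mini} to obtain a compact $y\not\le p$ with $c\cdot y=0$ and then splice such witnesses together, but routing through the minimal-prime decomposition avoids that extra juggling.)
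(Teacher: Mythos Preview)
Your proof is correct, but it takes a different route from the paper's. The paper argues directly via the characterization of minimal primes (Lemma~\ref{mini}): assuming $a\le p$, it invokes that lemma to produce $b\not\le p$ with $a\cdot b=0$, whence $b\le a^{*}$ and $a^{*}\not\le p$; the case $a^{*}\le p$ then follows by symmetry. You instead exploit the finite decomposition $0=\bigwedge_{i}p_{i}$ to manufacture the annihilating element $q=\bigwedge_{j\in J}p_{j}$ by hand, and then use primality of $p$ on the finite product $\prod_{j\in J}p_{j}$ to force some $p_{j}\le p$ with $j\ne 1$.

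What each buys: the paper's argument is shorter and conceptually cleaner once Lemma~\ref{mini} is available, and in principle does not need the finiteness hypothesis on the set of minimal primes. On the other hand, Lemma~\ref{mini} is stated only for \emph{compact} $x\le p$, so for arbitrary $a$ the paper's one-line invocation is a bit loose; your argument sidesteps that issue entirely and works for any $a$, at the cost of explicitly using the finiteness of $\mathrm{Min}(L)$ (which is part of the hypotheses anyway). You also make the ``at least one'' direction explicit via $a\cdot a^{*}=0$, which the paper leaves unstated. One small wording point: when you conclude ``a prime element strictly below the minimal prime $p_{1}$'', you are implicitly using that the $p_{i}$ are listed without repetition, so $j\in J$ and $1\notin J$ give $p_{j}\ne p_{1}$; it would read more smoothly to say that $p_{j}\le p_{1}$ together with minimality of $p_{1}$ forces $p_{j}=p_{1}$, contradicting $j\ne 1$.
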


\begin{proof} Let $a \leq p$.  By Lemma \ref{b1}, there
exist $b \not\leq p$ such that $a \cdot b =0$. Therefore $a^{*}
\nleq p$. Let $a^{*} \leq p$. Clearly, we  have  $a \not\leq p$, otherwise $a^* \not\leq p$, a contradiction.\end{proof}

\begin{lem}\label{2.30.} Let $L$ be a reduced, $1$-compact, compactly generated lattice with $L_{*}$ as a  multiplicatively closed set
having finitely many minimal prime elements. If $L$ has more than
two minimal prime elements, then $Z(L)$ is not an ideal and hence
$diam(\Gamma^{m}(L))=3$.\end{lem}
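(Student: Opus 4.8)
The plan is to exhibit three vertices of $\Gamma^m(L)$ whose join fails to lie in $Z(L)$, which shows $Z(L)$ is not an ideal; the conclusion $diam(\Gamma^m(L)) = 3$ then follows immediately from Theorem \ref{2.24.}, since $L$ is reduced with more than two minimal prime elements. So the real content is the first assertion. Let $p_1, p_2, \dots, p_n$ be the minimal prime elements of $L$, with $n \geq 3$. By Lemma \ref{2.29.} each $p_i$ is itself an element of $Z(L)$, and more generally $Z(L)$ is exactly the union of the $(p_i]$. The idea is to find an element that is not below \emph{any} $p_i$, yet is a join of elements each of which \emph{is} below some $p_i$ (hence each in $Z(L)$).

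First I would, for each $i$, produce a compact element $z_i \in L_*$ with $z_i \leq p_i$ but $z_i \not\leq p_j$ for all $j \neq i$: since $L$ is compactly generated and $p_i \not\leq p_j$ for each $j \neq i$, for each such $j$ there is a compact $x_{ij} \leq p_i$ with $x_{ij} \not\leq p_j$, and $z_i := \bigvee_{j \neq i} x_{ij}$ is compact, lies below $p_i$, and escapes every other $p_j$ (a compact element below a prime that is a finite join must have one joinand below that prime). Each $z_i$ lies below $p_i$, so $z_i \in Z(L)$ by Lemma \ref{2.29.}; in particular each $z_i$ is a vertex of $\Gamma^m(L)$ (one should check $z_i \neq 0$, which holds because $z_i \not\leq p_j$ while $0 \leq p_j$). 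Now consider $w := z_1 \vee z_2 \vee \cdots \vee z_n$. I claim $w \not\leq p_k$ for every $k$: if $w \leq p_k$ then in particular $z_j \leq p_k$ for $j \neq k$, contradicting the construction of $z_j$. Hence by Lemma \ref{2.29.}, $w \notin Z(L)$, while $w$ is a join of elements of $Z(L)$. Therefore $Z(L)$ is not an ideal.

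The step I expect to require the most care is verifying that the $z_i$ genuinely are \emph{vertices} of $\Gamma^m(L)$ — i.e. that they are nonzero and have nonzero annihilator — and, more delicately, that the join $w$ of finitely many members of $Z(L)$ really does evade all minimal primes; this is exactly where the hypothesis of \emph{finitely many} minimal prime elements is used, via Lemma \ref{2.29.}. With those checks in place, $Z(L)$ is not an ideal, and then Theorem \ref{2.24.} (with $L$ reduced and $n \geq 3 > 2$) gives $diam(\Gamma^m(L)) = 3$, completing the proof.
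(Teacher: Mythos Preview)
Your proof is correct but takes a different route from the paper. The paper's argument is shorter and hinges on Lemma~\ref{2.29a.}: pick any nonzero $a \in Z(L)$ and consider $a \vee a^{*}$. Since every minimal prime contains precisely one of $a$ and $a^{*}$ (Lemma~\ref{2.29a.}), the join $a \vee a^{*}$ lies below no minimal prime, so by Lemma~\ref{2.29.} it is not in $Z(L)$; Theorem~\ref{2.24.} then gives the diameter. Your approach instead builds, for each minimal prime $p_i$, a compact witness $z_i \leq p_i$ that escapes every other $p_j$, and shows $w = \bigvee_i z_i$ escapes all minimal primes. This avoids Lemma~\ref{2.29a.} entirely, trading the paper's two-element annihilator trick for an explicit $n$-element construction; both routes ultimately rest on Lemma~\ref{2.29.} to conclude $w \notin Z(L)$ and on Theorem~\ref{2.24.} for the diameter. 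One small remark: your parenthetical justification for $z_i \not\leq p_j$ invokes primeness, but the real reason is plain monotonicity --- since $x_{ij} \leq z_i$, the containment $z_i \leq p_j$ would force $x_{ij} \leq p_j$, contrary to choice.
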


\begin{proof} Let $L$ have more than
two minimal prime elements. Let $a , a^{*} \in Z(L)$. We claim
that $a \vee a^{*} \not\in Z(L)$. If $a \vee a^{*} \in Z(L)$, then
by Lemma \ref{2.29.} $a \vee a^{*}$ is contained in at least one
minimal prime element, say $q$ of $L$. But then by Lemma \ref{2.29a.}, the minimal prime element $q$ contains precisely one
of $a$ and $a^{*}$. Hence $Z(L)$ is not an ideal. Thus by Theorem
\ref{2.24.}, $diam(\Gamma^{m}(L))=3$.
\end{proof}

\begin{cor}[{Behboodi and Rakeei \cite[Lemma 1.8]{br2}}]\label{2.31.} Let $R$ be a reduced ring with finite
minimal primes. If $R$ has more than two minimal primes, then
$diam(\mathbb{AG}(R))=3$.\end{cor}

\end{document}